\documentclass[12pt]{article}
\usepackage{amssymb,amsmath,amsthm,amscd,tikz}

\title{Extension of Convex Function}  
\author{Min Yan\\ Department of Mathematics \\ Hong Kong University of Science and Technology} 

\newcommand{\sub}{\subset}
\newcommand{\pa}{\partial}

\newcommand{\bb}{\mathbb}

\newtheorem{theorem}{Theorem}[section]

\newtheorem{proposition}[theorem]{Proposition}

\theoremstyle{definition}

\newtheorem*{definition*}{Definition}
\newtheorem*{conjecture*}{Conjecture}
\newtheorem{example}{Example}[section]

\begin{document}
\maketitle

\begin{abstract}
We study the local and global versions of the convexity, which is closely related to the problem of extending a convex function on a non-convex domain to a convex function on the convex hull of the domain and beyond the convex hull. We also give the parallel results for the convexity defined by positive definite Hessian.  
\end{abstract}


\noindent Mathematics Subject Classification: Primary 26B25

\noindent Key Words: convex function, non-convex domain, convex hull, positive
definite Hessian, extension



\section{Introduction}

Convex functions appear in many important problems in pure and applied mathematics. In many literatures on convex analysis \cite{bv,Phelps,Rockafellar}, convex functions are usually defined only on convex domains. Moreover, convex functions are often extended to the whole linear space by setting the value to be $+\infty$ out of the convex domain, so that the extended function is still convex. While such treatment is preferred in some applications (e.g., optimization, convex programming) and some theories (e.g., duality), it may not be the desirable thing to do if the problem is more analytic (e.g., variational problems). 

In more analytic applications, the convexity of a $C^2$-function $f$ often means that its Hessian $H_f(v)=\sum\frac{\pa^2 f}{\pa x_i\pa x_j}v_iv_j$ is a positive semidefinite (or positive definite for strict convexity) quadratic form of vectors $v=(v_1,\dots,v_n)$. This ``Hessian approach'' to convexity is clearly local and does not require the domain to be convex. Moreover, the differentiable function cannot take the infinity value. Therefore on a non-convex domain, there are distinct global and local versions of the convexity, and it is worthwhile to study the relation between the two. The natural connection between the two is the question whether a convex function on a non-convex domain can be extended to a convex function on the convex hull of the domain. Moreover, in many applications, it is also desirable to know whether a convex function can be extended beyond the convex hull.

The extension of convex functions has wide ranging applications in geometric analysis \cite{tak}, nonlinear dynamics \cite{vv}, quantum computing \cite{uh1,uh2,wo} and economics \cite{pw}. Peters and Wakker \cite{pw}, in their study of decision making under risk, gave the necessary and sufficient condition for extending a function on a non-convex domain to a convex function on the convex hull of the domain. Dragomirescu and Ivan \cite{di} constructed the minimal convex extension of a convex function on a convex domain to the whole linear space. Researchers also studied the problem of extending a function on a subset of the boundary of a convex subset to a convex function on the convex subset \cite{bl,lima,rz}. All these works did not address the extension of the Hessian version of convexity.

The purpose of this paper is to clarify the global and local versions of the convexity and to study the extensions of convex functions. We emphasize that the functions and their extensions do not take the infinity value in this paper. We also study the extension of the Hessian version of convexity. We restrict ourselves to the Euclidean space, although some results remain valid in more general locally convex topological vector spaces.

In Section \ref{convex}, we introduce three types of convexities (and their strict versions): Convex, locally convex, and interval convex. As explained above, the first two concepts address the core issue concerning us. The third concept is included because it is the weakest version of convexity and plays a very useful technical role. We discuss the relation between various convexities and show that they are generally not equivalent. 

In Section \ref{hull}, we reformulate the results by Peters and Wakker into Theorem \ref{tohull} that gives convex extensions avoiding the infinity value. In Theorem \ref{tohulldiff}, we show the similar extension problem for $C^2$-functions with positive definite Hessian can be solved as long as the general convex extension problem can be solved.

In Section \ref{outside}, we try to extend the convexity out of the convex hull of the domain, while avoiding the infinity value. Theorem \ref{toall} says that, the condition for extending the convexity of a function on a bounded convex domain to the whole linear space is exactly the Lipschitz property. Theorem \ref{toalldiff} gives one case that Theorem \ref{tohull} (the extension to the convex hull) and Theorem \ref{toall} (the extension out of the convex hull) can be combined. Theorems \ref{hessian} and \ref{hessian2} give the Hessian version of the extension out of the convex hull. Since the Hessian convexity is local, we do not require the domain $\Omega$ to be convex in general, and the extension is actually to a subset slightly smaller than $\Omega\cup({\bb R}^n-\Omega^{\text{co}})$.

In Section \ref{compare}, we study in more detail the equivalence between various convexities. We conclude that, roughly speaking, if the domain is convex up to deleting a subset of codimension $\ge 2$, then all convexities are equivalent. However, the equivalence no longer holds if the domain is only convex up to deleting a subset of codimension $1$. 

Throughout the paper, all functions take ordinary values and never take the infinity value. Moreover, $\Omega^{\text{co}}$, $\Omega^{\text{aff}}$, $\Omega^{\text{ri}}$, $\Omega^{\epsilon}$ denote the convex hull, the affine hull, the relative interior, and the $\epsilon$-neighborhood of a subset $\Omega$. 

Finally, for more practical purpose, such as certain specific estimations in fluid dynamics, it would be even more useful if we can control the size of the Hessian of the extension. This leads to the following conjecture.

\begin{conjecture*}
Suppose $f$ is a $C^2$-function on a compact convex subset $\Omega$, such that the Hessian satisfies $A\|v\|^2<H_f(v)< B\|v\|^2$ for some constants $A$ and $B$. Then $f$ can be extended to a $C^2$-function on the whole linear space such that the Hessian satisfies the same bound.
\end{conjecture*}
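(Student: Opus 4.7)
The plan is to normalize so that the Hessian bound becomes symmetric about zero, apply a sharp $C^{1,1}$-extension theorem, upgrade the resulting extension to a $C^2$-function without exceeding the symmetric bound, and finally undo the normalization.

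For the normalization, set $c=(A+B)/2$ and $\epsilon=(B-A)/2$, and define $g(x)=f(x)-\frac{c}{2}\|x\|^2$. The hypothesis $A\|v\|^2<H_f(v)<B\|v\|^2$ on $\Omega$ is equivalent to $\|H_g(x)\|_{\mathrm{op}}<\epsilon$, and by compactness and continuity there is $\epsilon'<\epsilon$ with $\|H_g(x)\|_{\mathrm{op}}\le\epsilon'$ on $\Omega$. Since $\Omega$ is convex, this says $\nabla g|_\Omega$ is $\epsilon'$-Lipschitz. Any $C^2$-extension $\tilde g$ of $g$ to $\mathbb{R}^n$ with $\|H_{\tilde g}\|_{\mathrm{op}}<\epsilon$ everywhere yields, after adding $\frac{c}{2}\|x\|^2$ back, the required extension of $f$.

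For the extension I would apply Le Gruyer's sharp $C^{1,1}$-extension theorem for $1$-jets, or equivalently a Kirszbraun-type theorem for gradient fields on convex sets, to obtain $\tilde g\in C^{1,1}(\mathbb{R}^n)$ with $\tilde g|_\Omega=g$ and $\mathrm{Lip}(\nabla\tilde g)\le\epsilon'$, so that $\|H_{\tilde g}\|_{\mathrm{op}}\le\epsilon'$ almost everywhere. To upgrade $\tilde g$ from $C^{1,1}$ to $C^2$ I would use a partition of unity adapted to the decomposition of $\mathbb{R}^n$ into the $\delta$-interior of $\Omega$ (where $g$ is already $C^2$), the exterior of $\Omega$ (where the mollification $\tilde g*\phi_\eta$ at scale $\eta\ll\delta$ is $C^\infty$ with Hessian still bounded by $\epsilon'$), and a collar $U_\delta=\{x:\mathrm{dist}(x,\partial\Omega)<\delta\}$ where the two pieces are glued. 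Because $\tilde g$ already equals $g$ on $\Omega$, the two pieces agree on $U_\delta$ to $O(\eta)$ in $C^1$, so the Leibniz cross terms produced by the cut-off contribute $O(\eta/\delta)$ to the Hessian. Taking $\eta$ sufficiently small compared to $\delta$ and to the slack $\epsilon-\epsilon'$ keeps the Hessian of the glued function strictly below $\epsilon$ in operator norm, and adding $\frac{c}{2}\|x\|^2$ back finishes the argument.

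The delicate step, and the reason the statement remains conjectural, is the final $C^2$-gluing. The $C^{1,1}$-extension controls the Hessian only almost everywhere, and the mollified exterior piece need not match the interior $C^2$-jet of $g$ at $\partial\Omega$, so the cut-off produces Hessian error terms of order $\|g-\tilde g*\phi_\eta\|_{C^0(U_\delta)}/\delta^2$ and $\|\nabla g-\nabla(\tilde g*\phi_\eta)\|_{C^0(U_\delta)}/\delta$ that must be estimated using the $C^2$-modulus of continuity of $g$ near $\partial\Omega$. Making these estimates uniform over all compact convex $\Omega$ and all dimensions, and in particular when $\partial\Omega$ is non-smooth (for example when $\Omega$ is a polytope), is the technical obstacle; a cleaner resolution would be a direct $C^2$-analogue of Le Gruyer's theorem giving a \emph{pointwise} sharp Hessian operator-norm bound rather than a sharp $C^{1,1}$ gradient-Lipschitz bound, which to the best of my knowledge is known only in partial cases.
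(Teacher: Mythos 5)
You should first note that the paper does not prove this statement: it is posed explicitly as an open conjecture, and only the special case $A=0$, $B=+\infty$ is established (Theorem \ref{hessian2}, via Theorem \ref{hessian}), by adding a large multiple of an auxiliary function built from radial bumps outside an intersection of balls containing $\Omega^{\text{co}}$ --- a device that necessarily destroys any finite upper bound $B$ on the Hessian. So there is no proof in the paper to compare yours against; your proposal has to be judged as an attack on an open problem, and it does contain a genuine gap.

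Your normalization is correct and is the right first move: subtracting $\frac{c}{2}\|x\|^2$ with $c=(A+B)/2$ reduces the two-sided bound to $\|H_g\|_{\mathrm{op}}<\epsilon$, and compactness supplies the slack $\epsilon'<\epsilon$. The $C^{1,1}$ step is already not free --- Kirszbraun applies to Lipschitz maps, not to gradient fields, and even invoking Le Gruyer one must verify that for a convex domain the sharp extension constant of the $1$-jet equals $\mathrm{Lip}(\nabla g)$ rather than a fixed multiple of it --- but the decisive gap is the one you half-acknowledge: the upgrade to $C^2$. Your gluing cannot be located consistently. If the cut-off transition occupies a collar meeting the exterior of $\Omega$, then on that collar $g$ and $\tilde g*\phi_\eta$ are two extensions that agree together with their gradients only on $\Omega$, so they differ by $O(\delta^2)$ in $C^0$ and $O(\delta)$ in $C^1$ at distance $\delta$ from $\partial\Omega$; after multiplication by $|\nabla^2\alpha|=O(\delta^{-2})$ and $|\nabla\alpha|=O(\delta^{-1})$, the Leibniz terms are $O(1)$ --- comparable to $\epsilon$ itself, not to the slack $\epsilon-\epsilon'$ --- and they do not tend to zero as $\eta\to 0$, contrary to your claim that the pieces agree to $O(\eta)$ in $C^1$ (that estimate holds only at depth greater than $\eta$ inside $\Omega$). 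If instead you push the transition strictly inside $\Omega$, where the $O(\eta)$ agreement is genuine, the glued function equals $\tilde g*\phi_\eta$ rather than $f$ on a boundary layer of $\Omega$ and is no longer an extension of $f|_{\Omega}$. Escaping this dichotomy requires matching the full second-order jet of the exterior piece to that of $g$ along $\partial\Omega$, i.e.\ essentially a sharp $C^2$ Whitney-type extension theorem with a pointwise Hessian bound; as you say, no such theorem is available, which is exactly why the statement remains a conjecture.
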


Theorem \ref{hessian2} gives the affirmative answer to the special case $A=0$ and $B=+\infty$.

The author would like to thank the referee for many suggestions, especially simpler versions of examples.

\section{Convex, locally convex, and interval convex}
\label{convex}

A {\em convex combination} of several points $x_1,\dotsc,x_k\in {\bb R}^n$ is
\begin{equation}\label{ccombo}
x=\lambda_1 x_1+\cdots+\lambda_k x_k,\quad
\lambda_i\geq 0,\; \lambda_1+\cdots+\lambda_k=1.
\end{equation}
The {\em convex hull} $\Omega^{\text{co}}$ of a subset $\Omega\sub {\bb R}^n$ is the collection of all convex combinations of all points in $\Omega$. The subset is {\em convex} if and only if $\Omega^{\text{co}}=\Omega$.

A function $f$ on $\Omega$ is usually defined as convex if 
\begin{equation}\label{convexf}
f(x)\le \lambda_1 f(x_1)+\cdots+\lambda_k f(x_k)
\end{equation}
for any convex combination \eqref{ccombo}. However, if $\Omega$ is not convex, then we may have $x_i\in\Omega$ but $x\not\in\Omega$. We clarify the definition for not necessarily convex domain by introducing different kinds of convexities.

\begin{definition*}
Let $f$ be a function with domain $\Omega\sub{\bb R}^n$.
\begin{enumerate}
\item $f$ is {\em convex} if \eqref{convexf} is satisfied whenever $x_1,\dotsc,x_k\in\Omega$ and their convex combination $x=\lambda_1 x_1+\cdots+\lambda_k x_k\in\Omega$. 
\item $f$ is {\em locally convex} if at any $x\in\Omega$, there is a ball $B$ around $x$, such that the restriction of $f$ to $B\cap\Omega$ is convex.
\item $f$ is {\em interval convex} if the restriction of $f$ to any interval (i.e., straight line segment) inside $\Omega$ is convex.
\end{enumerate}
If the inequality in \eqref{convexf} is changed to strict inequality when all $x_i\ne x$, then we get the {\em strict} version of various convexities.
\end{definition*}

The convexity is the global property that we try to achieve. 

The local convexity is introduced to accommodate the second derivative test, or the Hessian version of the convexity. So a typical example is a $C^2$-function on an open domain, such that the Hessian is positive semidefinite everywhere. The classical Busemann-Feller-Aleksandrov theorem \cite{al,bf} says that the local convexity on an open domain is not far off from the Hessian convexity.

The interval convexity means
\begin{equation}\label{convexi}
f(\lambda_1x_1+\lambda_2x_2)\le \lambda_1 f(x_1)+\lambda_2 f(x_2) 
\text{ for any }\lambda_1x_1+\lambda_2x_2\in [x_1,x_2]\sub \Omega.
\end{equation}
Here we use $[x_1,x_2]$ to denote the interval between $x_1$ and $x_2$. We introduce the interval convexity because it is the weakest form of convexity. Therefore if the interval convexity implies the convexity, then many other kinds of convexities will also imply the convexity.

The three kinds of convexities are related by
\[
\text{convex}
\implies \text{locally convex}
\implies \text{interval convex}.
\]
The first implication is due to the fact that the convexity on $\Omega$ implies the convexity on subsets of $\Omega$. For the second implication, we first note that it is sufficient to consider compact intervals only. Suppose $f$ is locally convex and $I\sub \Omega$ is a compact interval. Then we have $I=I_1\cup \dots\cup I_p$, such that $I_i\cap I_{i+1}$ are open intervals in $I$, and $f$ is convex on each $I_i$. We know that, if a single variable function is convex on each of two intervals that overlap at more than one point, then the function is convex on the union of the two intervals. This enables us to conclude that $f$ is convex on $I$. 

It is well known that, on a convex domain $\Omega$, the usual convexity is equivalent to the property \eqref{convexf} for the special case $k=2$ (the combination $\lambda_1x_1+\lambda_2x_2$ is convex)
\begin{equation}\label{convexf2}
f(\lambda_1x_1+\lambda_2x_2)\le \lambda_1 f(x_1)+\lambda_2 f(x_2) 
\text{ for any }x_1,x_2,\lambda_1x_1+\lambda_2x_2\in \Omega.
\end{equation}
Since the convexity of $\Omega$ implies that the interval $[x_1,x_2]$ is automatically contained in $\Omega$, the property \eqref{convexf2} is the same as \eqref{convexi}. This shows that for convex domains, the interval convexity is the same as the convexity, so that the three kinds of convexities are the same. 

If $\Omega$ is {\em locally convex} in the sense that for any $x\in\Omega$, there is a ball $B$ around $x$, such that $B\cap \Omega$ is convex, then the local convexity and the interval convexity are the same. Open subsets are examples of such $\Omega$.

For general not necessarily convex $\Omega$, the property \eqref{convexf2} actually means that $f$ is convex on $L\cap\Omega$ for any line $L$. It is tempting to define the property as the 1-dimensional convexity (and further explore the $m$-dimensional convexity), but we already have enough number of convexities. The property \eqref{convexf2} is generally stronger than the interval convexity  \eqref{convexi}, and actually implies the convexity when the domain is open.

\begin{proposition}\label{lconvex}
A function on an open subset $\Omega$ is convex if and only if the function is convex on $L\cap\Omega$ for any straight line $L$. The strict version is also true.
\end{proposition}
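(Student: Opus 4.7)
\medskip

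The forward direction is immediate: the convexity property on $\Omega$, in the sense of the paper, restricts to convexity on any subset, in particular on each $L\cap\Omega$. For the reverse direction, my plan is to show that at every point $y_0\in\Omega$ there is a \emph{global} supporting affine function $a:{\bb R}^n\to{\bb R}$ with $a(y_0)=f(y_0)$ and $a\le f$ on all of $\Omega$. Once this is established the convexity of $f$ follows immediately: given any $x_1,\dotsc,x_k\in\Omega$ with convex combination $y=\sum\lambda_i x_i\in\Omega$, choose $y_0=y$; then $a(x_i)\le f(x_i)$ for each $i$, and since $a$ is affine,
\[
f(y)=a(y)=\sum\lambda_i a(x_i)\le\sum\lambda_i f(x_i).
\]

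\medskip

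The first step is to produce a \emph{local} supporting affine. On any open ball $B\sub\Omega$, the hypothesis applied to lines through pairs of points of $B$ gives \eqref{convexf2} on $B$; since $B$ is convex, a standard induction on $k$ (writing $y=\lambda_1 x_1+(1-\lambda_1)y'$ with $y'\in B$ automatic) upgrades this to full convexity of $f|_B$. Standard convex analysis on the open convex set $B$ then supplies a subgradient at $y_0$, i.e.\ an affine function $a$ with $a(y_0)=f(y_0)$ and $a\le f$ throughout $B$. In particular $f$ is locally convex on $\Omega$.

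\medskip

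The main step, which I expect to be the main obstacle, is propagating $a\le f$ from $B$ to all of $\Omega$. Given $z\in\Omega$, I would parametrize the line $L$ through $y_0$ and $z$ by $y(t)=(1-t)y_0+tz$, and consider $g(t)=f(y(t))-a(y(t))$ on the (open) parameter set $\{t:y(t)\in\Omega\}$. Since $a$ is affine and $f$ is convex on $L\cap\Omega$ by hypothesis, $g$ is convex on $L\cap\Omega$ in the paper's sense, and the $k=2$ instance with endpoints $y_0,z$ and combination $y(s)$ gives
\[
g(s)\le s\,g(1)\quad\text{whenever }s\in(0,1)\text{ and }y(s)\in\Omega.
\]
The subtlety is that $L\cap\Omega$ may be disconnected, with a gap between $y_0$ and $z$; but this does not matter, because openness of $\Omega$ at $y_0$ ensures $y(s)\in\Omega$ for all sufficiently small $s>0$, and the local supporting property gives $g(s)\ge 0$ there. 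Combining, $g(1)\ge g(s)/s\ge 0$, so $a(z)\le f(z)$. For the strict version, strict convexity of $f|_B$ yields a strict local support ($a<f$ on a punctured neighborhood of $y_0$), and strict $k=2$ convexity of $f$ on $L\cap\Omega$ gives $g(s)<s\,g(1)$; together these force $g(1)>0$, so $a<f$ on $\Omega\setminus\{y_0\}$, from which the strict convex combination inequality follows whenever the $x_i$ are all distinct from $y$.
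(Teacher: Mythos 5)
Your proof is correct, but it takes a genuinely different route from the paper's. The paper argues directly on the given convex combination $x=\sum\lambda_i x_i$: it shrinks each $x_i$ toward $x$ to points $y_i=\delta x_i+(1-\delta)x$ lying in a small ball $B\sub\Omega$ around $x$, bounds $f(y_i)$ above via the line-convexity on the line through $x$ and $x_i$, bounds $f(x)$ above by $\sum\lambda_i f(y_i)$ via the convexity of $f$ on the convex set $B$, and then cancels the $(1-\delta)f(x)$ terms; the strict version follows because the first of these bounds becomes strict. You instead build, at each $y_0\in\Omega$, a globally supporting affine function: local convexity on a ball supplies a subgradient, and the two-point instance of the hypothesis along each line through $y_0$ (valid even when $L\cap\Omega$ is disconnected, since the paper's definition only requires the endpoints and the combination to lie in the domain) propagates $a\le f$ from the ball to all of $\Omega$; global convexity and its strict version then drop out immediately. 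Both proofs rest on the same two ingredients --- convexity on a small ball plus convexity along lines through the base point --- but the paper's is entirely elementary, using only \eqref{convexf2} upgraded to \eqref{convexf} on convex sets, whereas yours imports the nonemptiness of the subdifferential of a finite convex function on an open convex set (standard, e.g.\ Rockafellar). In exchange you obtain the stronger structural statement that $f$ admits a global affine support at every point of $\Omega$, which makes both the general and the strict conclusions one-line consequences.
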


\begin{proof}
The necessity follows from the fact the (strict) convexity on $\Omega$ implies the (strict) convexity on any subset of $\Omega$. It remains to prove the sufficiency.

Consider a convex combination \eqref{ccombo} with all $x_i\ne x$. Since $\Omega$ is open, we have a small ball $B\sub \Omega$ around $x$. Then $f$ is convex on $L\cap B$ for any straight line $L$. As explained above, since $B$ is a convex subset, we conclude that $f$ is convex on $B$. 

Let $L_i$ be the line connecting $x$ and $x_i$. See Fig.~\ref{lconvex_fig}. For a small $\delta>0$, we have $y_i=\delta x_i+(1-\delta)x\in L_i\cap B$. Since $f$ is convex on $L_i\cap \Omega$, we have
\begin{equation}\label{lconvex_eq1}
f(y_i)\le \delta f(x_i)+(1-\delta)f(x).
\end{equation}
Since $f$ is convex on $B$, $y_i\in B$, and $x=\lambda_1y_1+\dotsb+\lambda_ky_k\in B$, we also have
\begin{align}
f(x) 
& \le \lambda_1 f(y_1)+\cdots+\lambda_k f(y_k)  \nonumber \\
& \le \lambda_1 [\delta f(x_1)+(1-\delta)f(x)]+\dotsb+\lambda_k [\delta f(x_k)+(1-\delta)f(x)] \nonumber \\
& = \delta [\lambda_1 f(x_1)+\cdots+\lambda_k f(x_k)]+(1-\delta)f(x). \label{lconvex_eq2}
\end{align} 
This is the same as \eqref{convexf}.

For the strict version, the inequality \eqref{lconvex_eq1} becomes strict, so that \eqref{lconvex_eq2} is overall a strict inequality.
\end{proof}

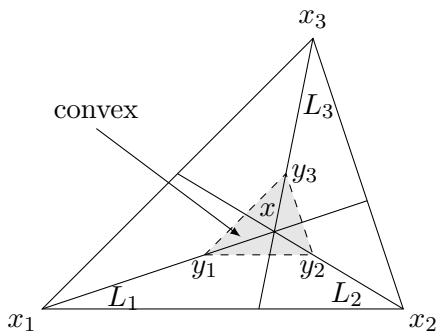
\begin{figure}[h]
\centering
\begin{tikzpicture}[scale=1.2,>=latex]

	\draw
		(0,0) node[below left=-2] {\small $x_1$} -- 
		(4,0) node[below right=-2] {\small $x_2$} -- 
		(3,3) node[above] {\small $x_3$} -- cycle;
	\filldraw[dashed, fill=gray!20, shift={(1.8cm,0.6cm)},scale=0.3]
		(0,0) node[below=-2] {\small $y_1$} -- 
		(4,0) node[below=-2] {\small $y_2$} -- 
		(3,3) node[right=-2] {\small $y_3$} -- cycle;
	\draw
		(0,0) -- node[below=-2.5, near start] {\small $L_1$} (3.6,1.2)
		(4,0) -- node[below=-1.5, near start] {\small $L_2$} (1.5,1.5)
		(3,3) -- node[right=-3, near start] {\small $L_3$} (2.4,0);
	\node at (2.5,1.1) {\small $x$};
	\draw[->]
		(0.6,2) node[above] {\small convex} -- (2.2,0.8);

\end{tikzpicture}
\caption{$1$-dimensional convexity on open subsets implies convexity.}
\label{lconvex_fig}
\end{figure}

\begin{example}\label{eg20}
Suppose $\Omega$ consists of three vertices of a triangle and an interior point of the triangle. Assigning any four values gives a function that is convex on any $L\cap\Omega$. For the function to be convex, however, the four values must satisfy one non-trivial relation. Therefore the open condition in Proposition \ref{lconvex} is necessary. 
\end{example}

\begin{example}\label{eg21}
The local convexity does not generally imply the convexity. For example, given convex functions on disjoint open convex domains $\Omega_1$ and $\Omega_2$, the combined function on $\Omega=\Omega_1\cup\Omega_2$ is still locally convex, but not necessarily convex. A specific counterexample is
\[
f(x)=\begin{cases}
1, & \text{if }x>0, \\
-1, & \text{if }x<0.
\end{cases}
\]

The example has non-connected domain. For a counterexample on a connected domain, see Example \ref{eg22}.
\end{example}

\begin{example}\label{eg23}
The interval convexity does not generally imply the local convexity. Since we do have the implication for open subsets, counterexamples can be constructed only for domains of lower dimension. 

Consider the subset $\Omega=\{re^{i\theta}\colon r\ge 0,\;\theta=0,\frac{2}{3}\pi,\frac{4}{3}\pi\}$ of the plane. The function $(r-1)^2$ expressed in the polar coordinates is convex on each of the three branches of $\Omega$ and is therefore interval convex on $\Omega$. However, the function is not locally convex at $0$ because the convexity fails for the convex combination $0=\frac{1}{3}r+\frac{1}{3}re^{i\frac{2}{3}\pi}+\frac{1}{3}re^{i\frac{4}{3}\pi}$, $0<r<2$. 
\end{example}

By similar thinking, we have the implications
\[
\text{strictly convex}
\implies \text{strictly locally convex}
\implies \text{strictly interval convex}.
\]
On a convex subset, the three kinds of strict convexities are equivalent. Moreover, on a locally convex subset, the strict local convexity and the strict interval convexity are the same. The idea of Example \ref{eg21} (and Example \ref{eg22}) shows that the strict local convexity does not generally imply the strict convexity.  Examples \ref{eg23} shows that the strict interval convexity does not generally imply the strict local convexity. 

The following result shows that, on open domains, the ``strict'' part of the strict convexity is a local requirement.

\begin{proposition}\label{sconvex}
A function on an open subset is strictly convex if and only if it is convex and strictly locally convex.
\end{proposition}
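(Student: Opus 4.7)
The forward direction is immediate: strict convexity on $\Omega$ implies convexity, and applying the definition on any ball $B\sub\Omega$ shows $f$ is strictly convex on $B$, hence strictly locally convex. So the real task is the converse: assuming $f$ is convex on the open set $\Omega$ and strictly locally convex, I want to show that for every convex combination $x=\lambda_1x_1+\cdots+\lambda_kx_k\in\Omega$ with all $\lambda_i>0$ and not all $x_i=x$, the inequality \eqref{convexf} is strict.

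The plan is to reuse the construction from the proof of Proposition \ref{lconvex}, but to extract a strict inequality from the strictly locally convex step. Since $\Omega$ is open, pick a ball $B$ around $x$ that sits inside $\Omega$ and is small enough that strict local convexity gives strict convexity of $f$ on $B$. For small $\delta>0$, set $y_i=\delta x_i+(1-\delta)x$; these lie in $B$ and satisfy $\sum\lambda_iy_i=x$. Because at least one $x_i\ne x$, the corresponding $y_i\ne x$, so the combination $x=\sum\lambda_iy_i$ is a nontrivial convex combination inside $B$ with not all $y_i=x$.

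Now I combine two ingredients. Global convexity of $f$ on $\Omega$ (hence on each line segment $[x,x_i]\sub L_i\cap\Omega$) yields the non-strict estimate $f(y_i)\le \delta f(x_i)+(1-\delta)f(x)$, exactly as in \eqref{lconvex_eq1}. Strict convexity of $f$ on $B$ applied to the combination $x=\sum\lambda_iy_i$ yields the strict inequality $f(x)<\sum\lambda_if(y_i)$. Substituting the first into the second reproduces the computation \eqref{lconvex_eq2}, now with an overall strict inequality, giving $f(x)<\delta\sum\lambda_if(x_i)+(1-\delta)f(x)$. Cancelling $(1-\delta)f(x)$ and dividing by $\delta>0$ yields $f(x)<\sum\lambda_if(x_i)$, which is the desired strict convexity.

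I do not anticipate any real obstacle; the only thing to be careful about is that the strict inequality comes from the ball step, while the line-segment step only needs plain convexity (which we have as a hypothesis). The openness of $\Omega$ is used precisely to place $B$ inside $\Omega$ and to ensure the auxiliary points $y_i$ lie in the domain, exactly as in Proposition \ref{lconvex}.
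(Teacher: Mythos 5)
Your argument is correct and is essentially identical to the paper's proof: both reuse the construction of Proposition \ref{lconvex}, obtaining \eqref{lconvex_eq1} from the (non-strict) convexity of $f$ on $\Omega$ and making the first inequality in \eqref{lconvex_eq2} strict via the strict convexity of $f$ on the ball $B$. The only cosmetic point is that strictness per the paper's definition is required only when all $x_i\ne x$ (so all $y_i\ne x$ automatically), which is slightly cleaner than your ``not all $x_i=x$'' formulation, and the parenthetical claim that $[x,x_i]\sub\Omega$ is unnecessary --- the inequality $f(y_i)\le\delta f(x_i)+(1-\delta)f(x)$ follows directly from the definition of convexity since $y_i\in\Omega$.
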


\begin{proof}
The proof follows the same idea as Proposition \ref{lconvex}. By the strict local convexity, $f$ is strictly convex on a ball $B\sub \Omega$ around $x$. We have \eqref{lconvex_eq1} by the convexity of $f$ on $\Omega$. We also have \eqref{lconvex_eq2}, in which the first inequality is strict by the strict convexity of $f$ on $B$. Then we get \eqref{convexf} with strict inequality.
\end{proof}

\begin{example}\label{eg24}
The open condition in Proposition \ref{sconvex} is necessary. Consider the subset $\Omega$ of the plane consisting of the intervals $[-1,1]\times \{1\}$, $[-1,1]\times \{-1\}$ and the origin $(0,0)$. The function $x^2$ is convex on the whole plane and is therefore convex on $\Omega$. The function is also locally strictly convex on $\Omega$ because it is strictly convex on the two intervals. However, the functions fails the strict convexity for the convex combination $(0,0)=\frac{1}{2}(0,1)+\frac{1}{2}(0,-1)$. 
\end{example}

\section{Extension to the Convex Hull}
\label{hull}

The following is the well known result \cite[Theorem 1 and Corollary 2]{pw} about extending the convexity to the convex hull. We modify and extend the result to avoid the infinity value.

\begin{theorem}\label{tohull}
Suppose $f$ is a convex function on $\Omega$. Then under either of the following assumptions, $f$ can be extended to a convex function on the convex hull $\Omega^{\text{\rm co}}$.
\begin{enumerate}
\item $f$ is bounded below.
\item $\Omega$ contains a point in the relative interior of the convex hull $\Omega^{\text{\rm co}}$.
\end{enumerate}
\end{theorem}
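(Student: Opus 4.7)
The plan is to define the extension on $\Omega^{\text{co}}$ to be the convex envelope
\[
\tilde f(x)=\inf\Bigl\{\lambda_1 f(x_1)+\cdots+\lambda_k f(x_k)\colon x=\lambda_1 x_1+\cdots+\lambda_k x_k,\; x_i\in\Omega\Bigr\},
\]
the infimum running over all representations of $x$ as a convex combination of points of $\Omega$. Such representations exist precisely because $x\in\Omega^{\text{co}}$, so $\tilde f(x)\in[-\infty,+\infty)$ is well defined. Three things need to be verified: (i) $\tilde f=f$ on $\Omega$, (ii) $\tilde f$ is convex on $\Omega^{\text{co}}$, and (iii) $\tilde f(x)>-\infty$ for every $x\in\Omega^{\text{co}}$.

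Items (i) and (ii) should be routine. For (i), the trivial representation $x=1\cdot x$ gives $\tilde f(x)\le f(x)$, while for $x\in\Omega$ the reverse inequality is exactly the hypothesis that $f$ is convex on $\Omega$: any competing convex combination of $x$ uses points $x_i\in\Omega$ whose combination lands in $\Omega$, so $f(x)\le\sum\lambda_i f(x_i)$. For (ii), given $x',x''\in\Omega^{\text{co}}$ and $\mu\in[0,1]$, near-optimal convex combinations of $x'$ and $x''$ in points of $\Omega$ can be blended, with weights $\mu$ and $1-\mu$, into a single convex combination of $\mu x'+(1-\mu)x''$ whose value approaches $\mu\tilde f(x')+(1-\mu)\tilde f(x'')$; passing to the infimum gives the convexity inequality for $\tilde f$.

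The main obstacle, as always for a convex-envelope construction, is (iii): the infimum could a priori be $-\infty$. Under assumption (1), if $f\ge m$ on $\Omega$, then $\sum\lambda_i f(x_i)\ge m$ for every convex combination, so $\tilde f\ge m$ and we are done. Under assumption (2) I would argue as follows. Fix a point $x_0\in\Omega\cap\Omega^{\text{ri}}$. For any $y\in\Omega^{\text{co}}$, the relative interior property lets me find $z\in\Omega^{\text{co}}$ on the extension of the segment from $y$ through $x_0$, so that
\[
x_0=\mu y+(1-\mu)z\qquad\text{for some }\mu\in(0,1).
\]
Fix once and for all some representation $z=\sum\nu_j w_j$ with $w_j\in\Omega$, and set the finite constant $C=\sum\nu_j f(w_j)$. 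For an arbitrary representation $y=\sum\lambda_i x_i$ with $x_i\in\Omega$, the blended combination
\[
x_0=\mu\sum\lambda_i x_i+(1-\mu)\sum\nu_j w_j
\]
is a convex combination of points of $\Omega$ whose result lies in $\Omega$, so the convexity of $f$ on $\Omega$ gives $f(x_0)\le\mu\sum\lambda_i f(x_i)+(1-\mu)C$. Rearranging yields $\sum\lambda_i f(x_i)\ge\mu^{-1}\bigl(f(x_0)-(1-\mu)C\bigr)$, a lower bound that depends on $y$ only through the fixed choices of $x_0$, $z$, and the representation of $z$, and is therefore uniform over all convex combinations representing $y$. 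Hence $\tilde f(y)>-\infty$, and the three items together show that $\tilde f$ is the desired convex extension.
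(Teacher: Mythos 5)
Your proposal is correct, and its skeleton (the convex roof $\tilde f$, the verification that it extends $f$ and is convex, and the reduction of everything to showing $\tilde f>-\infty$) is exactly the paper's. Where you genuinely differ is in the finiteness argument under assumption (2). The paper fixes affinely independent points $x_0,\dotsc,x_d\in\Omega$ spanning $\Omega^{\text{aff}}$ together with a point $\bar x\in\Omega$ in the relative interior of their simplex, and then, for each $x\in\Omega^{\text{co}}$, performs a case analysis on the position of $x$ to select a sub-collection $x_{i_1},\dotsc,x_{i_p}$ with $\bar x$ in the relative interior of the convex hull of $x,x_{i_1},\dotsc,x_{i_p}$; inverting the resulting convex combination gives the lower bound. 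You instead push the segment from $y$ through the relative-interior point $x_0$ slightly beyond $x_0$ to a point $z\in\Omega^{\text{co}}$, fix one representation of $z$ by points of $\Omega$, blend it with an arbitrary representation of $y$, and apply the convexity of $f$ on $\Omega$ at $x_0$ directly. This is the same underlying idea --- write the known point of $\Omega$ as a convex combination with $y$ carrying a positive weight, then invert --- but your version avoids the paper's geometric case analysis entirely and, by working with $f$ rather than $\tilde f$ in the key inequality, sidesteps any worry about manipulating a possibly infinite value. Two cosmetic points: you should write $x_0\in\Omega\cap\Omega^{\text{co,ri}}$ (the hypothesis concerns the relative interior of the convex hull, not of $\Omega$), and you should note that $z$ exists because $y-x_0$ is a direction inside $\Omega^{\text{aff}}$, so a relative neighborhood of $x_0$ in $\Omega^{\text{co}}$ contains $x_0+s(x_0-y)$ for small $s>0$ (with the degenerate case $y=x_0$ handled trivially).
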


\begin{proof}
If $f$ is extended to the convex hull, and $x\in \Omega^{\text{co}}$ is expressed as a convex combination \eqref{ccombo} with $x_i\in\Omega$, then \eqref{convexf} gives an upper bound for the value of the extension at $x$. So it is natural to take the infimum of all such upper bounds. Therefore, to adopt a terminology from quantum computing, we introduce the {\em convex roof}
\[
\hat{f}(x)
=\inf\{\lambda_1 f(x_1)+\dotsb+\lambda_k f(x_k)\},\quad x\in \Omega^{\text{co}},
\]
where the infimum runs over all the possible convex combinations \eqref{ccombo} with $x_i\in \Omega$. The convex roof can be constructed for any function (but with possible $-\infty$ value), and is actually the biggest convex function on $\Omega^{\text{co}}$ satisfying $\hat{f}\le f$ on $\Omega$. It is easy to see that $\hat{f}$ extends $f$ when $f$ is already convex.

The first assumption implies that $\hat{f}$ does not take the infinity value. 

Under the second assumption, let $\Omega^{\text{aff}}$ be the affine span of $\Omega$ and $d=\dim\Omega^{\text{co}}=\dim \Omega^{\text{aff}}$. The assumption implies that there are (necessarily affinely independent) $x_0,\dotsc,x_d\in \Omega$ that affinely span $\Omega^{\text{aff}}$, and there is $\bar{x}\in\Omega$ lying in the relative interior of the convex hull of $x_0,\dotsc,x_d$. 

For any $x\in\Omega^{\text{co}}-\{\bar{x},x_0,\dotsc,x_d\}$, we can always find $0\le i_1<\dotsb<i_p\le d$, such that $x,x_{i_1},\dotsc,x_{i_p}$ are affinely independent and $\bar{x}$ is in the relative interior of the convex hull of $x,x_{i_1},\dotsc,x_{i_p}$. The collection $\{x,x_{i_1},\dotsc,x_{i_p}\}$ depends on the location of $x$. See Fig.~\ref{criterion_fig}, in which some possible locations of $x$ are indicated by dots, and the corresponding collections $\{x,x_{i_1},\dotsc,x_{i_p}\}$ are also indicated next to the dots. Therefore we have a convex combination $\bar{x}=\lambda x+\lambda_1x_{i_1}+\dotsb+\lambda_px_{i_p}$ with $\lambda,\lambda_i\in (0,1)$. For the convex roof extension $\hat{f}$, we then have $f(\bar{x})\le \lambda \hat{f}(x)+\lambda_1f(x_{i_1})+\dotsb+\lambda_pf(x_{i_p})$. This gives the lower bound $\lambda^{-1}(f(\bar{x})-\lambda_1f(x_{i_1})-\dotsb-\lambda_pf(x_{i_p}))$ for the value $\hat{f}(x)$. Therefore $\hat{f}$ does not take the infinity value.
\end{proof}

\begin{figure}[h]
\centering
\begin{tikzpicture}[scale=1,>=latex]

	\draw
		(-1.5,-1.5)  -- (4.5,4.5)
		(-2,0) -- (6,0)
		(4.5,-1.5) -- (2.5,4.5)
		(1.5,-1.5) -- (3.5,4.5)
		(-2.1,-0.9) -- (5.6,2.4)
		(5.5,-0.9) -- (-1,3);
	\draw[dashed]
		(0,0) -- (1,3.8) -- (4,0) -- cycle;
	\draw[very thick]
		(0,0) -- (3,3) -- (4,0) -- cycle;
	
	\node at (0.15,-0.2) {\small $x_0$};
	\node at (3.85,-0.2) {\small $x_1$};
	\node at (3.3,2.95) {\small $x_2$};
	\node at (2.25,1.3) {\small $\bar{x}$};
	
	\fill (1,3.8) circle (0.1);
	\node at (0,3.9) {\small $\{x,x_0,x_1\}$};
	\node at (1.1,4.06) {\small $x$};
	
	\fill (0,2.4) circle (0.1);
	\node at (-0.3,2) {\small $\{x,x_1\}$};
	
	\fill (-1,0) circle (0.1);
	\node at (-1,0.4) {\small $\{x,x_1,x_2\}$};
	
	\fill (-1.4,-0.6) circle (0.1);
	\node at (-2.4,-0.4) {\small $\{x,x_1,x_2\}$};
	
	\fill (4,4.3) circle (0.1);
	\node at (4,4.7) {\small $\{x,x_0,x_1\}$};
	
	\fill (1.4,0.6) circle (0.1);
	\draw[<-] (1.36,0.5) -- (0.5,-0.8) node[below=-3] {\small $\{x,x_1,x_2\}$};
	
	\fill (2.16,0.5) circle (0.1);
	\draw[<-] (2.2,0.4) -- (3,-0.8) node[below=-3] {\small $\{x,x_2\}$};
	
	\fill (3,2) circle (0.1);
	\draw[<-] (3.1,2.05) -- (5,3) node[above=-3] {\small $\{x,x_0,x_1\}$};

\end{tikzpicture}
\caption{$\bar{x}$ is in the relative interior of the convex hull of $x,x_{i_1},\dotsc,x_{i_p}$.}
\label{criterion_fig}
\end{figure}
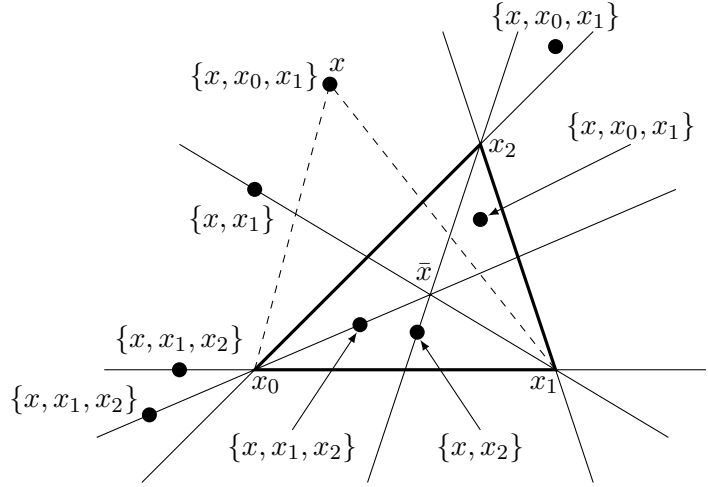

\begin{example}\label{eg22}
In terms of the polar coordinates on ${\bb R}^2$, consider $f=r^2+a\theta$ on the {\em connected} domain
\[
\Omega=\{(r,\theta)\colon |r-1|<\epsilon,\; 0<\theta<2\pi\}.
\]
Note that the Hessian $H_{r^2}(u,v)=2u^2+2v^2$ of $r^2=x^2+y^2$ has positive lower bound on $\Omega$. Moreover, for sufficiently small $a$ and $\epsilon$, the second order derivatives of $a\theta$ are uniformly small on $\Omega$. Therefore the Hessian $H_f=H_{r^2}+aH_{\theta}$ remains positive definite on $\Omega$, so that $f$ is locally convex on $\Omega$. In fact, $|a|<2(1-\epsilon)^2$ is enough.

If $f$ is convex, then by Theorem \ref{tohull}, $f$ extends to a convex function on the convex hull $\Omega^{\text{co}}=\{(x,y)\colon x^2+y^2<(1+\epsilon)^2\}$. In particular, by the continuity of convex functions (see \cite[Theorem 10.1]{Rockafellar}, for example), $f$ extends to a continuous function on $\Omega^{\text{co}}$. Since $\lim_{y\to 0^+}f(1,y)=1$ and $\lim_{y\to 0^-}f(1,y)=1+2\pi a$, we get a contradiction.

The example shows that even on a connected domain, the local convexity does not always imply the convexity.
\end{example}

\begin{example}\label{eg31}
The strict version of Theorem \ref{tohull} is not true. The function $f(x,y)=e^x y^2$ is strictly convex on $\Omega={\bb R}\times ({\bb R}-0)$. The continuity of convex functions implies that the only convex extension is $\hat{f}(x,y)=e^x y^2$ throughout ${\bb R}^2$. The extension is not strictly convex along the $x$-axis.
\end{example}

Despite the counterexample, there is still the possibility that the strict version of Theorem \ref{tohull} may hold for compact subsets. Indeed this is the case for the {\em Hessian convexity}.

We recall that, if the Hessian of a second order differentiable function is positive semidefinite at every point of the domain, then the function is locally convex. If the Hessian is positive definite, then the function is strictly locally convex. Here, in case the domain is not open, we mean that the second order differentiability happens on an open subset containing the domain. In particular, if the domain is a compact subset $\Omega$, then a $C^2$-function on $\Omega$ is a $C^2$-function on the $\epsilon$-neighborhood
\[
\Omega^{\epsilon}
=\{x\colon \|x-y\|<\epsilon\text{ for some }y\in\Omega\}
\]
for some $\epsilon>0$.

We note that the convex hull of the $\epsilon$-neighborhood $\Omega^{\epsilon,\text{co}}$ is the same as the $\epsilon$-neighborhood of the convex hull $\Omega^{\text{co},\epsilon}$.

\begin{theorem}\label{tohulldiff}
If a $C^2$-function $f$ is convex and has positive definite Hessian on an open neighborhood of a compact subset $\Omega\sub {\bb R}^n$, then $f|_{\Omega}$ can be extended to a $C^2$-function on the convex hull $\Omega^{\text{\rm co}}$ with positive definite Hessian.  
\end{theorem}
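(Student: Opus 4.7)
\noindent\emph{Proof proposal.}
The plan is to combine Theorem \ref{tohull}, which gives a (merely continuous) convex extension on a convex open neighborhood of $\Omega^{\text{co}}$, with a mollification and cutoff gluing, plus a small quadratic boost to force the Hessian to be positive definite.

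By compactness of $\Omega$ and continuity of $H_f$, I first fix $\epsilon_0,\delta>0$ with $f$ being $C^2$, convex, and $H_f\succeq \delta I$ on $\Omega^{\epsilon_0}$. Applying Theorem \ref{tohull}(1) to $f|_{\Omega^{\epsilon_0/2}}$ (bounded below, since its closure sits inside $\Omega^{\epsilon_0}$) yields a convex $F$ on $(\Omega^{\epsilon_0/2})^{\text{co}}=\Omega^{\text{co},\epsilon_0/2}$ with $F=f$ on $\Omega^{\epsilon_0/2}$. Mollifying with a $C^\infty$ kernel $\eta_r$ of radius $r<\epsilon_0/4$ gives a $C^\infty$ convex function $F_r:=F*\eta_r$ on $\Omega^{\text{co},\epsilon_0/4}$; since $F=f$ on $\Omega^{\epsilon_0/2}$, this mollification equals $f*\eta_r$ on $\Omega^{\epsilon_0/4}$, so $F_r\to f$ in $C^2$ on compact subsets as $r\to 0$. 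I then fix a $C^\infty$ cutoff $\phi$ with $\phi=1$ on $\Omega^{\epsilon_0/16}$ and $\phi=0$ outside $\Omega^{\epsilon_0/8}$ and define
\[
g := \phi\,f + (1-\phi)\bigl(F_r+\rho\|x\|^2\bigr)
\]
for small parameters $\rho,r>0$. Then $g$ is $C^2$ on $\Omega^{\text{co}}$, $g=f$ on $\Omega$, and on the two ``good'' regions one immediately gets $H_g=H_f\succeq\delta I$ where $\phi=1$ and $H_g\succeq 2\rho I$ where $\phi=0$.

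The main obstacle is controlling the Hessian in the transition annulus, where a direct computation gives $H_g=(1-\phi)(H_{F_r}+2\rho I)+\phi H_f+E$ with an error term $E$ gathering the contributions of the derivatives of $\phi$ paired with $\psi:=f-F_r-\rho\|x\|^2$ and its gradient. The first two summands are bounded below by $\min(2\rho,\delta)I$; for the error I would estimate $\|E\|_{\mathrm{op}}\le A(r)+\rho B$, where $A(r)\to 0$ by the $C^2$-convergence of $F_r$ to $f$ and $B$ depends only on $\phi$ and the diameter of $\Omega^{\text{co}}$. Fixing $\rho$ small so that $2\rho$ dominates $\rho B$ and then $r$ small enough to kill $A(r)$ yields $H_g\succ 0$ throughout. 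The delicate point is that the quadratic boost contributes both to the positive gain and to the error, so one needs the derivatives of $\phi$ to be manageable relative to the diameter; if this cannot be arranged directly, the simplest fix is to replace $\rho\|x\|^2$ by a more carefully tailored strongly convex $C^2$ function that vanishes to second order on $\Omega$, leaving the rest of the scheme unchanged. All remaining ingredients---the application of Theorem \ref{tohull}, the mollification, and the verification that the glued $g$ is $C^2$ with $g|_\Omega=f$---are routine.
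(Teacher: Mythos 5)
Your proposal follows essentially the same route as the paper: extend via Theorem \ref{tohull}, mollify, glue with a cutoff, and add a small quadratic term, with positivity checked separately on the inner region, the outer region, and the transition annulus. The one point you flag as delicate dissolves if you organize the transition-region estimate as the paper does: the annulus lies inside $\Omega^{\epsilon_0}$, where $H_f\succeq\delta I$ with $\delta$ fixed in advance (and likewise $H_{F_r}\succeq\delta I$ there, being a mollification of $f$), so the baseline positivity on the annulus is the constant $\delta$ rather than $2\rho$; both error contributions $A(r)$ and $\rho B$ are then absorbed by choosing $r$ and $\rho$ small, with no circularity and no need to replace $\rho\|x\|^2$ by a tailored convex function.
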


The proof shows that if the function is $C^r$, $r\ge 2$, then we can make the extended function $C^r$.

\begin{proof}
Suppose $f$ is convex and has positive definite Hessian on the $3\epsilon$-neighborhood $\Omega^{3\epsilon}$. By Theorem \ref{tohull}, we have a convex extension $\hat{f}$ on the convex hull $\Omega^{\text{co},3\epsilon}$. 

Let $\phi\ge 0$ be a smooth function supported on the ball $B_{\delta}$ of radius $\delta\in (0,\epsilon)$ and centered at the origin, such that $\int\phi(x)dx=1$. Then    
\[
g(x)=\int \hat{f}(y)\phi(x-y)dy=\int \hat{f}(x-y)\phi(y)dy 
\]
is a smooth function on $\Omega^{\text{co},2\epsilon}$. The second expression for $g$ and the convexity of $\hat{f}$ imply that $g$ is convex on $\Omega^{\text{co},2\epsilon}$. Let $0\le\alpha\le 1$ be a smooth function, such that $\alpha=1$ on $\Omega$ and $\alpha=0$ outside $\Omega^{\epsilon}$. Construct
\[
h=\alpha f+(1-\alpha)(g+c \|x\|^2)
=f+(1-\alpha)(g-f+c \|x\|^2),
\]
where $c>0$ is a very small constant to be determined. The function $h$ is $C^2$ on $\Omega^{\text{co},2\epsilon}$.

On $\Omega$, we have $h=f$. Therefore $h$ extends $f|_{\Omega}$.

On $\Omega^{\text{co},2\epsilon}-\Omega^{\epsilon}$, we have $h=g+c \|x\|^2$. Since $g$ is convex and $c>0$, the Hessian of $h$ is positive definite.

For $x\in \Omega^{\epsilon}$, by $\phi(y)=0$ for $\|y\|\ge\epsilon$ and $x-y\in \Omega^{2\epsilon}$ for $\|y\|<\epsilon$, we have
\begin{align*}
\pa_{ij}g(x)-\pa_{ij}f(x)
&=\int (\pa_{ij}\hat{f}(x-y)-\pa_{ij}f(x))\phi(y)dy \\
&=\int (\pa_{ij}f(x-y)-\pa_{ij}f(x))\phi(y)dy
\end{align*}
and the similar equalities for $\pa_ig-\pa_if$ and $g-f$. Since the derivatives of $f$ up to the second order are uniformly continuous on the compact subset $\overline{\Omega^{2\epsilon}}$, for sufficiently small $\delta$, the differences $|\pa_{ij}f(x-y)-\pa_{ij}f(x)|$, $|\pa_{i}f(x-y)-\pa_{i}f(x)|$, $|f(x-y)-f(x)|$ can be uniformly small for $x\in \Omega^{\epsilon}$ and $\|y\|<\delta$. Since $\phi$ is supported on $B_{\delta}$, we conclude that $\pa_{ij}g(x)-\pa_{ij}f(x)$, $\pa_ig-\pa_if$ and $g-f$ can also be uniformly small on $\Omega^{\epsilon}$. By further choosing $c$ to be sufficiently small (in addition to the already small $\delta$), the second order derivatives of $(1-\alpha)(g-f+c \|x\|^2)$ can be uniformly small on $\Omega^{\epsilon}$. 

On the other hand, the Hessian of $f$ is positive definite on the compact subset $\overline{\Omega^{\epsilon}}$ and therefore has a positive definite lower bound on $\Omega^{\epsilon}$. This means that $H_f(v)\ge C\|v\|^2$ on $\Omega^{\epsilon}$ for some constant $C>0$ and any vector $v$. By choosing sufficiently small $\delta$ and $c$, the absolute value of the Hessian of $\alpha(g-f+c \|x\|^2)$ is $<C\|v\|^2$ on $\Omega^{\epsilon}$. Then the Hessian of $h$ is still positive definite on $\Omega^{\epsilon}$.
\end{proof} 

\begin{example}
In Theorem \ref{tohulldiff}, the convexity is assumed on a neighborhood of $\Omega$. To sees why this cannot be weakened to the convexity on $\Omega$ only, consider $\Omega=\{0,1,2\}\sub{\bb R}$. Let the function $f$ on a neighborhood of $\Omega$ be given by $f(x)=(x-x_0)^2$ near any $x_0\in\Omega$. Then $f$ has positive definite Hessian, and is convex but not strictly convex on $\Omega$. Therefore any extension of $f$ to $\Omega^{\text{co}}=[0,2]$ cannot have positive definite Hessian at every point.
\end{example}

Despite the counterexample, it is conceivable to replace ``convex on a neighborhood of $\Omega$'' by ``strictly convex on $\Omega$'' in Theorem \ref{tohulldiff}. The key is the following: Suppose $f$ is a function on a neighborhood of compact $\Omega$, satisfying the following:
\begin{enumerate}
\item $f$ is strictly convex on $\Omega$.
\item For any $x\in \Omega$, $f$ is strictly convex on a neighborhood of $x$.
\end{enumerate}
Then I suspect that $f$ is strictly convex on a neighborhood of $\Omega$.

\section{Extension out of the Convex Hull}
\label{outside}

Not every convex function on a convex subset $\Omega$ can be extended to a convex function on the whole space without taking the infinity value. For example, a convex continuous function $f(x)$ on $[a,b]$ can be extended to a convex function on ${\bb R}$ without taking the infinity if and only if the one-sided derivatives $f'_+(a)$ and $f'_-(b)$ are finite. Equivalently, this means that $f$ is Lipschitz on the whole interval. The observation can be extended to multivariable convex functions. The following construction is essentially the same as the one by Dragomirescu and Ivan \cite{di} and is reformulated to avoid the infinity value. 

\begin{theorem}\label{toall}
A convex function on a bounded convex subset can be extended to a convex function on the whole linear space if and only if it is a Lipschitz function.
\end{theorem}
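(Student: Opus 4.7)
The plan is to prove the two directions separately.

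For the necessity direction, I would invoke the standard fact \cite{Rockafellar} that a convex function $F$ on $\bb R^n$ which avoids the infinity value is automatically continuous and locally Lipschitz. Since $\Omega$ is bounded, $\overline{\Omega}$ is compact, so $F$ is Lipschitz on a neighborhood of $\overline{\Omega}$; in particular $f = F|_{\Omega}$ is Lipschitz.

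For the sufficiency direction, let $K$ be a Lipschitz constant of $f$. Following the idea of Dragomirescu and Ivan \cite{di}, I would define the extension as a supremum of affine minorants with bounded slope:
\[
F(x) = \sup\{L(x) : L \text{ is affine},\; L \le f \text{ on } \Omega,\; \|\nabla L\| \le K\}.
\]
Being a supremum of affine functions, $F$ is automatically convex; the slope bound makes $F$ finite everywhere, since any admissible $L$ satisfies $L(x) \le f(y_0) + K\|x - y_0\|$ for a fixed reference point $y_0\in \Omega$. Clearly $F \le f$ on $\Omega$. For the reverse inequality, at each $x$ in the relative interior $\Omega^{\text{ri}}$, I would invoke the standard existence of a subgradient $v$ of $f$ at $x$ within $\Omega^{\text{aff}}$; the Lipschitz hypothesis forces $\|v\| \le K$. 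Extending $v$ by zero in directions orthogonal to $\Omega^{\text{aff}}$ produces an admissible affine $L$ on $\bb R^n$ with $L(x) = f(x)$, so $F = f$ on $\Omega^{\text{ri}}$. Since $F$ is convex and finite on $\bb R^n$ (hence continuous) and $f$ is Lipschitz (hence extends continuously to $\overline{\Omega}$), passing to the limit from $\Omega^{\text{ri}}$ extends the equality $F=f$ to all of $\Omega$.

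The main obstacle I anticipate is calibrating the slope constraint. Dropping it would let the supremum blow up far from $\Omega$; imposing too small a bound would leave the supremum short of $f$ somewhere in $\Omega$. The key point is that for convex $f$, being $K$-Lipschitz is equivalent to every subgradient having norm at most $K$, so the Lipschitz constant is exactly the right threshold to reconcile the finiteness of $F$ on $\bb R^n$ with the reverse inequality $F \ge f$ on $\Omega^{\text{ri}}$.
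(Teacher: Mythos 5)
Your argument is correct, but it takes a genuinely different route from the paper. For sufficiency the paper builds the \emph{minimal} convex extension by extrapolation: $\tilde{f}(x)=\sup\{\lambda f(y)+(1-\lambda)f(z)\colon x=\lambda y+(1-\lambda)z,\ y,z\in\Omega,\ \lambda\ge 1\}$, i.e.\ the supremum of the lower bounds that any convex extension is forced to satisfy along lines through $\Omega$. The price of that construction is that convexity of $\tilde{f}$ is not automatic, and the bulk of the paper's proof is a geometric verification of interval convexity (choosing an auxiliary point $z'$ and a parallel interval $I$). You instead take the supremum of $K$-Lipschitz affine minorants, which is the \emph{largest} $K$-Lipschitz convex minorant; convexity and global finiteness then come for free, and the work shifts to showing $F\ge f$ on $\Omega$, for which you need the existence of subgradients at relative interior points (Rockafellar, Theorem 23.4) together with the observation that at such points every subgradient has norm at most $K$ — both of which are fine, and your limiting argument from $\Omega^{\text{ri}}$ to $\Omega$ using continuity of the finite convex $F$ and Lipschitz continuity of $f$ closes the gap at relative boundary points of $\Omega$ that belong to $\Omega$. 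Two small points worth making explicit: the admissible family is nonempty (e.g.\ the constant $\inf_\Omega f$, finite because a Lipschitz function on a bounded set is bounded), so $F>-\infty$; and your extension is automatically globally $K$-Lipschitz, a stronger conclusion than the paper states (in the spirit of Rzymowski's result cited there), whereas the paper's minimal extension need not preserve the Lipschitz constant. Your necessity argument is the same as the paper's, which simply cites Rockafellar, Theorem 10.4.
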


\begin{proof}
The necessity follows from \cite[Theorem 10.4]{Rockafellar}. For the sufficiency, we first consider the case that $\Omega$ affinely spans the whole space. Then any point $x\in {\bb R}^n-\Omega$ is of the form
\[
x=\lambda y+(1-\lambda)z,\quad y,z\in\Omega,\; \lambda>1.
\]
If $f$ extends to a convex function $\tilde{f}$ on the whole ${\bb R}^n$, then we must have
\[
\tilde{f}(x)\ge \lambda f(y)+(1-\lambda)f(z).
\]
Therefore we define
\[
\tilde{f}(x)
=\sup\{\lambda f(y)+(1-\lambda)f(z)\colon
x=\lambda y+(1-\lambda)z, \;y,z\in\Omega,\; \lambda\ge 1\}.
\]
The convexity of $f$ implies that $\tilde{f}$ extends $f$.

Suppose $|f(x)-f(x')|\le l\|x-x'\|$ on $\Omega$. Then for $y,z\in\Omega$, we have
\begin{align*}
|\lambda f(y)+(1-\lambda)f(z)|
&\le \lambda|f(y)-f(z)|+|f(z)| \\
&\le \lambda l\|y-z\|+|f(z)| \\
&= l\|x-z\|+|f(z)|.
\end{align*}
Since $\Omega$ is bounded and a Lipschitz function on bounded $\Omega$ is bounded, the right side is bounded for fixed $x$. This proves that $\tilde{f}$ does not take the infinity value. 

To prove $\tilde{f}$ is convex, we only need to prove that it is interval convex. Consider a convex combination $x=\mu_1x_1+\mu_2x_2$. Suppose $x=\lambda y+(1-\lambda)z$ for some $\lambda> 1$ and $y,z\in\Omega^{\text{ri}}$. Since $\Omega$ is assumed to affinely span the whole space, we have a small interval $I\sub\Omega$ such that $I$ is parallel to $[x_1,x_2]$ and $y$ is in the interior of $I$. Then we choose a point $z'$ in the interior of the interval $[y,z]$ sufficiently close to $y$, such that the intervals $[x_1,z']$ and $[x_2,z']$ intersect $I$ at $y_1$ and $y_2$. See Fig.~\ref{toall_fig}.

\begin{figure}[h]
\centering
\begin{tikzpicture}[scale=1]

	\draw
		(0,0) node[below] {\small $x_1$} -- 
		(2.5,1.5) node[above left =-4] {\small $z'$} -- 
		(5,0) node[below] {\small $x_2$} -- (0,0)
		(2,0) node[below] {\small $x$} -- 
		(3,3) node[left] {\small $z$}
		(1.5,1) node[below right=-2] {\small $y_1$} -- node[below left=-1] {\small $y$}
		(3.7,1) node[below left=-1] {\small $y_2$} node[right] {\small $I$};

\end{tikzpicture}
\caption{Prove the interval convexity of the extension.}
\label{toall_fig}
\end{figure}
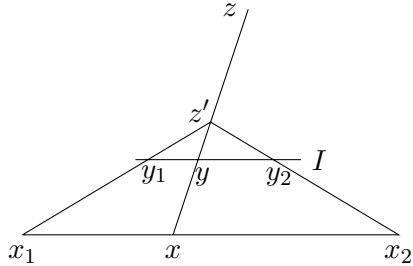

Write $x=\lambda' y+(1-\lambda')z'$. Then $\lambda'>1$, and by the convexity of $f$ on the interval $[x,z]$, on which $x,y,z',z$ form a monotone sequence, we have
\[
\lambda' f(y)+(1-\lambda')f(z')\geq \lambda f(y)+(1-\lambda)f(z).
\]
Since $I$ is parallel to $[x_1,x_2]$, we have $y=\mu_1y_1+\mu_2y_2$, $x_1=\lambda' y_1+(1-\lambda')z'$, $x_2=\lambda' y_2'+(1-\lambda')z'$. Then the convexity of $f$ on $I$ tells us
\[
f(y)\le \mu_1f(y_1)+\mu_2f(y_2).
\]
Moreover, by the definition of $\tilde{f}$, we have
\[
\tilde{f}(x_1)\ge \lambda' f(y_1)+(1-\lambda')f(z'),\quad 
\tilde{f}(x_2)\ge \lambda' f(y_2)+(1-\lambda')f(z').
\]
Consequently, 
\begin{align*}
\mu_1\tilde{f}(x_1)+\mu_2\tilde{f}(x_2)  
& \ge \lambda' (\mu_1f(y_1)+\mu_2f(y_2))+(1-\lambda')f(z')  \\
& \ge \lambda'f(y)+(1-\lambda')f(z') \\
& \ge \lambda f(y)+(1-\lambda)f(z).  
\end{align*}
The Lipschitz property of $f$ on $\Omega$ implies that it is continuous, so that the supremum of the right side for all $y,z\in\Omega^{\text{ri}}$ is the same as the supremum $\tilde{f}(x)$ for all $y,z\in\Omega$.

Finally, if $\Omega$ does not affinely span the whole space, then the argument above produces an extension to a convex function on the affine span of $\Omega$. It is very easy to further extend the convex function on an affine subspace to a convex function on the whole space.
\end{proof}

We may try to combine Theorems \ref{tohull} and \ref{toall}. The key is to verify that the extension to the convex hull is still Lipschitz. While this may not be true in general, the following shows that, if the domain is a ``convex boundary band'', then this is true.

\begin{theorem}\label{toalldiff}
Suppose $\Omega$ is a bounded convex subset, and $A$ is a subset satisfying $\bar{A}\sub\Omega^{\text{\rm ri}}$. Then a convex function on $\Omega-A$ can be extended to a convex function on the whole linear space if and only if it is Lipschitz.
\end{theorem}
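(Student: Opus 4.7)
The plan is to reduce sufficiency to Theorems \ref{tohull} and \ref{toall}, with a Lipschitz-preservation step in between. Necessity is standard: a convex function on $\bb R^n$ is locally Lipschitz, hence Lipschitz on the bounded set $\Omega-A$.

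For sufficiency, suppose $f$ is convex and $L_f$-Lipschitz on $\Omega-A$. I would first verify that $(\Omega-A)^{\text{co}}=\Omega$: given $x\in A\sub \Omega^{\text{ri}}$ and any line $L$ through $x$ in $\Omega^{\text{aff}}$, the open interval $L\cap\Omega^{\text{ri}}$ strictly contains the compact set $L\cap \bar A$ (because $\bar A\sub \Omega^{\text{ri}}$). Hence on each side of $x$ one finds a point of $L\cap(\Omega^{\text{ri}}-\bar A)\sub \Omega-A$, exhibiting $x$ as a convex combination of two points of $\Omega-A$. Since $f$ is bounded on the bounded set $\Omega-A$, Theorem \ref{tohull}(1) produces a convex extension $\hat f$ of $f$ to $\Omega$ (the convex roof). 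Directly from the definition of $\hat f$ as an infimum of convex-combination averages, $\inf_{\Omega-A}f\le \hat f\le \sup_{\Omega-A}f$ on all of $\Omega$.

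The heart of the argument is to show $\hat f$ is Lipschitz on $\Omega$. By compactness of $\bar A$ and openness of $\Omega^{\text{ri}}$ in $\Omega^{\text{aff}}$, choose $\eta>0$ so small that the closed $2\eta$-neighborhood $K'=\{x\in\Omega^{\text{aff}}:\text{dist}(x,\bar A)\le 2\eta\}$ is contained in $\Omega^{\text{ri}}$. Set $K=\{x\in\Omega^{\text{aff}}:\text{dist}(x,\bar A)\le \eta\}$, a compact subset of $\Omega^{\text{ri}}$ at distance $\eta$ from $\Omega^{\text{aff}}-K'$. Since $\hat f$ is convex on the open convex set $\Omega^{\text{ri}}$ and uniformly bounded on $K'$, the classical Lipschitz-on-compact-subsets result for convex functions on open convex sets gives a constant $L_K$ with $|\hat f(u)-\hat f(v)|\le L_K\|u-v\|$ for all $u,v\in K$. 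On the other hand $A\sub \bar A\sub K$, so $\Omega-K\sub \Omega-A$, where $\hat f=f$ is $L_f$-Lipschitz. For arbitrary $x,y\in\Omega$, the segment $[x,y]$ lies in $\Omega$ by convexity. If both endpoints are in $K$, or both in $\Omega-K$, the Lipschitz bound with constant $L:=\max(L_K,L_f)$ is immediate. Otherwise, by continuity of $\text{dist}(\cdot,\bar A)$ along $[x,y]$, there is $z\in[x,y]$ with $\text{dist}(z,\bar A)=\eta$; such $z$ lies in $K$ and outside $\bar A$, hence in $\Omega-A$. The triangle inequality then yields
\[
|\hat f(x)-\hat f(y)|\le L_K\|x-z\|+L_f\|z-y\|\le L\|x-y\|.
\]
Applying Theorem \ref{toall} to the convex Lipschitz $\hat f$ on the bounded convex $\Omega$ produces the desired extension to $\bb R^n$.

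The main obstacle is the Lipschitz-preservation step: the classical Lipschitz estimate needs the compact set to sit strictly inside an open convex set on which the function is convex, and the quantitative gap $\bar A\sub \Omega^{\text{ri}}$ supplied by the hypothesis is exactly what permits inserting such a compact $K$ containing $\bar A$ while keeping $\Omega-K$ inside $\Omega-A$, where Lipschitzness of $\hat f$ is given. Were $\bar A$ allowed to touch the relative boundary of $\Omega$, the convex roof could develop unbounded one-sided derivatives near the hole and this plan would collapse.
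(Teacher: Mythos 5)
Your proof is correct, and its overall skeleton matches the paper's: reduce to Theorem \ref{tohull} (extension to $(\Omega-A)^{\text{co}}=\Omega$) followed by Theorem \ref{toall}, with the real content being that the convex roof $\hat f$ remains Lipschitz on all of $\Omega$. Where you differ is in how you prove that Lipschitz-preservation step. The paper restricts $\hat f$ to the line through $x$ and $y$, chooses points $x_1,y_1,x,y,x_2,y_2$ in monotone order with the outer pairs in $\Omega^{\text{ri}}-\bar A$, and uses the monotonicity of difference quotients of a one-variable convex function to sandwich $|\hat f(x)-\hat f(y)|/\|x-y\|$ between the two outer difference quotients; this preserves the \emph{same} Lipschitz constant $l$, at the cost of a separate limiting argument when $x$ or $y$ lies on the relative boundary of $\Omega$. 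You instead insert a compact collar $K$ around $\bar A$ inside $\Omega^{\text{ri}}$, invoke the classical fact that a convex function bounded on a neighborhood of a compact subset of the (relative) interior is Lipschitz there with constant controlled by $(\sup\hat f-\inf\hat f)/\eta$, and split an arbitrary segment at a point of the level set $\operatorname{dist}(\cdot,\bar A)=\eta$, which lies in $K\cap(\Omega-A)$. This yields only the weaker constant $\max(L_K,L_f)$, which can exceed $L_f$, but it is entirely sufficient for the theorem as stated, it correctly identifies where the hypothesis $\bar A\sub\Omega^{\text{ri}}$ enters (it is what lets the collar $K'$ fit inside $\Omega^{\text{ri}}$), and it handles relative-boundary points of $\Omega$ with no extra limiting argument, since outside $K$ the function $\hat f$ coincides with $f$ where the hypothesis already gives the bound. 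In short: the paper's one-dimensional difference-quotient argument buys the sharp constant; your collar argument buys uniformity and avoids the boundary case.
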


\begin{proof}
The necessity follows from \cite[Theorem 10.4]{Rockafellar}. It remains to prove the sufficiency. 

By Theorem \ref{tohull}, a convex function $f$ on $\Omega-A$ can be extended to a convex function $\hat{f}$ on $(\Omega-A)^{\text{co}}=\Omega$ (the equality is a consequence of $\bar{A}\sub\Omega^{\text{\rm ri}}$). We will show that $|f(x)-f(y)|\le l\|x-y\|$ on $\Omega-A$ implies $|\hat{f}(x)-\hat{f}(y)|\le l\|x-y\|$ on $\Omega$. By Theorem \ref{toall}, $\hat{f}$ can be further extended to a convex function on the whole linear space.

Let $x,y\in \Omega^{\text{ri}}$. Let $L$ be the straight line connecting $x$ and $y$. Then $L\cap\bar{A}$ is a compact subset inside the open interval $L\cap  \Omega^{\text{ri}}$. Therefore we can find $x_1,y_1,x_2,y_2\in L\cap (\Omega^{\text{ri}}-\bar{A})$, such that $x_1,y_1,x,y,x_2,y_2$ form a strictly monotone sequence on $L$. See Fig.~\ref{toalldiff_fig}. Then by the convexity of $\hat{f}$ on the interval $[x_1,y_2]$, we have 
\[
\frac{|\hat{f}(x)-\hat{f}(y)|}{\|x-y\|}\leq
\max\left\{
\frac{|f(x_1)-f(y_1)|}{\|x_1-y_1\|},
\frac{|f(x_2)-f(y_2)|}{\|x_2-y_2\|}\right\}
\le l.
\]

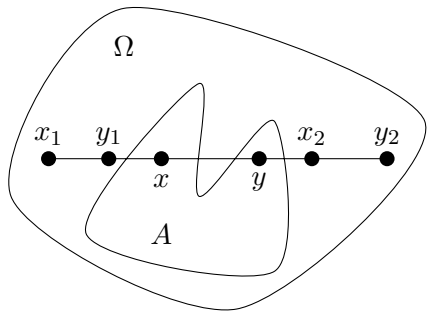
\begin{figure}[h]
\centering
\begin{tikzpicture}[scale=1]

\draw 
	plot[smooth cycle] coordinates{(-2,0.5) (-0.5,3) (3.5,1.5) (1,-1) }
	plot[smooth cycle] coordinates{(-1,0) (0.5,2) (0.5,0.5) (1.5,1.5) (1.5,-0.5) }
	(-1.5,1) -- (3,1);
\node at (-0.5,2.5) {\small $\Omega$};
\node at (0,0) {\small $A$};

\fill (-1.5,1) circle (0.1);
\node at (-1.5,1.3) {\small $x_1$};

\fill (-0.7,1) circle (0.1);
\node at (-0.7,1.3) {\small $y_1$};

\fill (0,1) circle (0.1);
\node at (0,0.7) {\small $x$};

\fill (1.3,1) circle (0.1);
\node at (1.3,0.7) {\small $y$};

\fill (2,1) circle (0.1);
\node at (2,1.3) {\small $x_2$};

\fill (3,1) circle (0.1);
\node at (3,1.3) {\small $y_2$};

\end{tikzpicture}
\caption{Verify the Lipschitz property.}
\label{toalldiff_fig}
\end{figure}

So we proved $|\hat{f}(x)-\hat{f}(y)|\le l\|x-y\|$ for any interior points $x$ and $y$ of the interval $L\cap\Omega$. If $x\not\in\Omega^{\text{ri}}$, then $x$ is an end point of the interval $L\cap\Omega$, and $x\in L\cap (\Omega-\bar{A})$. Therefore we can find a sequence $x_i\in L\cap (\Omega^{\text{ri}}-\bar{A})$ converging to $x$. Since $f$ is Lipschitz on $\Omega-\bar{A}$ and is therefore continuous at $x$, taking the limit of $|\hat{f}(x_i)-\hat{f}(y)|=|f(x_i)-\hat{f}(y)|\le l\|x_i-y\|$ gives us $|\hat{f}(x)-\hat{f}(y)|=|f(x)-\hat{f}(y)|\le l\|x-y\|$. Same argument can be made when $y\not\in\Omega^{\text{ri}}$, and even when both $x,y\not\in\Omega^{\text{ri}}$. 
\end{proof}

\begin{example}
To see the necessity of the condition $\bar{A}\sub\Omega^{\text{ri}}$ in Theorem \ref{toalldiff}, let $\Omega$ be the disk of radius $2$ centered at the origin $(0,0)$, and let $A$ be the disk of radius $1$ centered at the point $(1,0)$. Then $f(x,y)=\frac{1}{2-x}$ is convex on $\Omega-A$, but any convex extension that includes the point $(2,0)$ must take the value $+\infty$ at $(2,0)$.
\end{example}

Example \ref{eg31} can be slightly modified to show that Theorems \ref{toall} and \ref{toalldiff} do not hold for strictly convex functions. On the other hand, Theorem \ref{tohulldiff} suggests that it is still possible to consider the extension of $C^2$-functions with positive definite Hessian beyond the convex hull. For such Hessian convex functions, it is natural to allow non-convex domains.

\begin{theorem}\label{hessian}
Suppose $f$ is a $C^2$-function with positive definite Hessian on a compact subset $\Omega$. Then for any open subset $\Omega'$ containing $\Omega^{\text{\rm co}}$ and sufficiently small $\epsilon>0$, $f$ can be extended to a $C^2$-function on $\Omega^{\epsilon}\cup ({\bb R}^n-\Omega')$ with positive definite Hessian.  
\end{theorem}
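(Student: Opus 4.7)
The plan is to exploit two observations: positive definite Hessian is a purely local condition, and for sufficiently small $\epsilon>0$ the sets $\overline{\Omega^\epsilon}$ and ${\bb R}^n-\Omega'$ are disjoint. Together these allow the extension to be defined by two completely unrelated formulas on the two pieces of $\Omega^\epsilon\cup({\bb R}^n-\Omega')$, with no matching requirement between them.

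First I would choose $\epsilon$. Since $\Omega$ is compact and $\Omega'$ is open with $\Omega\sub\Omega^{\text{co}}\sub\Omega'$, we have $d(\Omega,{\bb R}^n-\Omega')>0$. By hypothesis $f$ is $C^2$ with positive definite Hessian on some open set $W$ containing $\Omega$, and $W$ may be shrunk so that positive definiteness holds on all of $W$, since the Hessian is continuous. Picking $\epsilon>0$ with $\overline{\Omega^{2\epsilon}}\sub\Omega'\cap W$ guarantees simultaneously that $\overline{\Omega^\epsilon}$ is disjoint from ${\bb R}^n-\Omega'$ and that $f$ is $C^2$ with positive definite Hessian throughout $\Omega^{2\epsilon}$. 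Then I would set $\tilde f:=f$ on $\Omega^\epsilon$ and $\tilde f(x):=C\|x\|^2$ on ${\bb R}^n-\Omega'$ for an arbitrary constant $C>0$; the latter has Hessian $2CI$, positive definite everywhere.

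To verify that $\tilde f$ is a $C^2$-function on $\Omega^\epsilon\cup({\bb R}^n-\Omega')$, I would exhibit disjoint open extensions: take $U_1=\Omega^{3\epsilon/2}$ and $U_2={\bb R}^n-\overline{\Omega^{3\epsilon/2}}$. Since $\overline{\Omega^{3\epsilon/2}}\sub\Omega^{2\epsilon}\sub\Omega'\cap W$, we have $U_1\supset\Omega^\epsilon$, $U_2\supset{\bb R}^n-\Omega'$, and $U_1\cap U_2=\emptyset$. Setting $\tilde f=f$ on $U_1$ and $\tilde f=C\|x\|^2$ on $U_2$ then yields a $C^2$-function on the open set $U_1\cup U_2\supset\Omega^\epsilon\cup({\bb R}^n-\Omega')$, whose Hessian is positive definite at every point of $\Omega^\epsilon\cup({\bb R}^n-\Omega')$ by the independent properties of $f$ and of $C\|x\|^2$. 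In sharp contrast to Theorem \ref{tohulldiff}, no convolution or cut-off is needed here—the strict separation of the two target regions renders the Hessian convexities on them independent—so I do not anticipate any real obstacle beyond the routine choice of $\epsilon$ ensuring both $\overline{\Omega^{2\epsilon}}\sub\Omega'$ and persistence of the positive definite Hessian of $f$ on $\Omega^{2\epsilon}$.
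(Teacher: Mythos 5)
Your construction rests on two moves: reading the conclusion as ``a function on the set $\Omega^{\epsilon}\cup({\bb R}^n-\Omega')$'' with no coherence required between the two pieces, and choosing $\epsilon<d(\Omega,{\bb R}^n-\Omega')$ so that the pieces are disjoint. Neither matches what the theorem is asserting. The restriction ``sufficiently small $\epsilon$'' refers only to the neighborhood $\Omega^{2\epsilon}$ on which the given $f$ is $C^2$ with positive definite Hessian (this is the paper's convention for $C^2$ on a compact set); it is not tied to $\Omega'$. The conclusion must hold, for a fixed such $\epsilon$, for \emph{every} open $\Omega'\supset\Omega^{\text{co}}$ --- in particular for $\Omega'\sub\Omega^{\epsilon}$, in which case $\Omega^{\epsilon}\cup({\bb R}^n-\Omega')={\bb R}^n$ and the two regions overlap. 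That is exactly how Theorem \ref{hessian2} (extension of a Hessian-convex function from a compact convex set to all of ${\bb R}^n$) is deduced, and it is the regime where your construction is impossible: you cannot set $\tilde f=f$ and $\tilde f=C\|x\|^2$ on overlapping sets. Under your proof the admissible $\epsilon$ shrinks to $0$ as $\Omega'$ shrinks toward $\Omega^{\text{co}}$, so the gap $\Omega'-\Omega^{\epsilon}$ never closes and none of the downstream results (Theorem \ref{hessian2}, the repeated extensions of Fig.~\ref{repeat}, the use in Theorem \ref{th203}) can be obtained.

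Even in the disjoint regime, the intended (and proved) statement is that the extension is the restriction of a \emph{single} $C^2$ function defined on all of ${\bb R}^n$; gluing two unrelated formulas on the two components proves a vacuous statement, since any function on any compact set could be ``extended'' this way to any region at positive distance from it. The genuine difficulty, which your argument sidesteps, is to interpolate between $f$ near $\Omega$ and a globally convex function far away \emph{across the gap} $\Omega'-\Omega^{\epsilon}$ while staying $C^2$ everywhere, giving up positive definiteness of the Hessian only inside that gap. The paper does this with $h=\alpha f+cg$, where $g=\sum\gamma(\|x-x_i\|/r_i)$ vanishes on an intersection of balls squeezed between $\Omega^{\text{co}}$ and $\Omega'\cap\Omega^{\text{co},\epsilon}$ and has positive definite Hessian outside their closed intersection, $\alpha$ is a cutoff equal to $1$ on $\Omega^{\epsilon}$ and $0$ outside $\Omega^{2\epsilon}$, and $c$ is taken large enough that the Hessian of $cg$ dominates the bounded Hessian of $\alpha f$ on $\Omega^{\text{co},2\epsilon}-\Omega'$. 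An argument of this kind is needed; the disjointness observation does not substitute for it.
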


The theorem suggests that $\Omega\cup({\bb R}^n-\Omega^{\text{co}})$ is roughly a ``universal extendable region'' for the Hessian convexity. However, the theorem can be used repeatedly to extend to bigger regions. The key point is that the Hessian convexity is a local property, so that we do not need to maintain the (global) convexity for the extension. For example, in Fig.~\ref{repeat}, we may start with a function with positive definite Hessian on $\Omega_1$, extend to a function with positive definite Hessian on $\Omega_2$, and then further extend to a function with positive definite Hessian on $\Omega_3$.

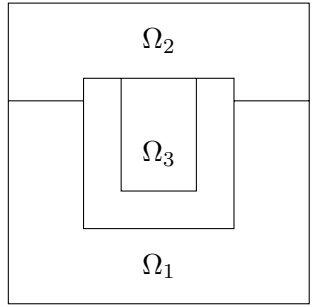
\begin{figure}[h]
\centering
\begin{tikzpicture}[scale=1]

\draw
	(-2,0) rectangle (2,4)
	(-1,1) rectangle (1,3)
	(-0.5,3) -- (-0.5,1.5) -- (0.5,1.5) -- (0.5,3)
	(-2,2.7) -- (-1,2.7)
	(2,2.7) -- (1,2.7);

\node at (0,0.5) {\small $\Omega_1$};
\node at (0,3.5) {\small $\Omega_2$};
\node at (0,2) {\small $\Omega_3$};

\end{tikzpicture}
\caption{Repeatedly extend the Hessian convexity.}
\label{repeat}
\end{figure}

\begin{proof}
Suppose $f$ has positive definite Hessian on the $2\epsilon$-neighborhood $\Omega^{2\epsilon}$. Since $\Omega$ is compact, we know $\Omega^{\text{co}}$ is compact, and can find finitely many big balls $B_i=B(x_i,r_i)$, such that 
\[
\Omega\sub \Omega^{\text{co}}\sub \cap B_i\sub \cap \bar{B}_i\sub \Omega'\cap \Omega^{\text{co},\epsilon}.
\]

\begin{figure}[h]
\centering
\begin{tikzpicture}[scale=1]

\draw
	plot[smooth cycle] coordinates{(-2,-1) (-2,1) (-1,1) (-1,0) (1,0) (1,1) (2,1) (2,-1)}
	(0,1.3) arc (90:80:20) node[below] {\small $B_3$}
	(0,1.3) arc (90:100:20) 
	(0,-1.28) arc (270:280:20) node[above] {\small $B_4$}
	(0,-1.28) arc (270:260:20) 
	(-2.4,-2.1) node[right] {\small $B_1$} arc (182:170:20)
	(2.4,-2.1) node[left] {\small $B_2$} arc (-2:10:20);

\draw[dashed]
	plot[smooth cycle] coordinates{(-2.3,-1.3) (-2.3,1.3) (-0.7,1.3) (-0.7,0.3) (0.7,0.3) (0.7,1.3) (2.3,1.3) (2.3,-1.3)}
	plot[smooth cycle] coordinates{(-2.6,-1.6) (-2.6,1.6) (-0.4,1.6) (-0.4,0.6) (0.4,0.6) (0.4,1.6) (2.6,1.6) (2.6,-1.6)}
	(-1.8,1.5) -- node[below=3] {\small $\Omega^{\text{co},\epsilon}$} (1.8,1.5);
	
\draw[<->]
	(0.5,-1.2) -- node[left] {\small $\epsilon$} (0.5,-1.57);
\draw[<->]
	(1,-1.18) -- node[right, fill=white] {\small $2\epsilon$} (1,-1.9);

\node at (0,-0.7) {\small $\Omega$};

\end{tikzpicture}
\caption{Extend the Hessian convexity outside the convex hull.}
\end{figure}
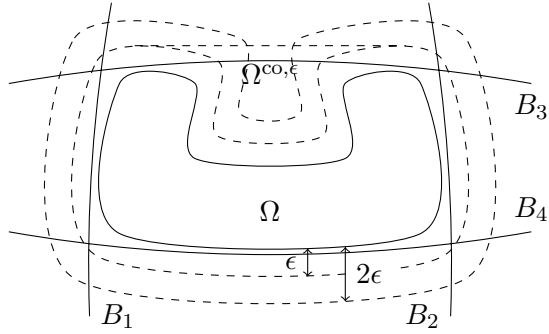

Let $\gamma(t)$ be a smooth function on $[0,\infty)$, such that $\gamma=0$ on $[0,1]$ and $t^{-1}\gamma\,'(t)$ is strictly increasing on $(1,\infty)$. The Hessian of $\gamma(\|x\|)$ is
\[
H_{\gamma(\|x\|)}(v)
=\frac{1}{\|x\|}\left(\gamma\,'(\|x\|)\|v\|^2+\left.\dfrac{d(t^{-1}\gamma\,'(t))}{dt}\right|_{t=\|x\|}(x\cdot v)^2\right),
\]
which is zero for $\|x\|\le 1$ and positive definite for $\|x\|>1$. Then 
\[
g(x)=\sum\gamma\left(\frac{\|x-x_i\|}{r_i}\right)
\] 
is a smooth function on ${\bb R}^n$, such that $g=0$ on $\cap B_i$. Moreover, the Hessian of $g$ is positive semidefinite on ${\bb R}^n$ and positive definite on ${\bb R}^n-\cap\bar{B}_i$.

Let $0\leq\alpha\leq 1$ be a smooth function on ${\bb R}^n$, such that $\alpha=1$ on $\Omega^{\epsilon}$ and $\alpha=0$ outside $\Omega^{2\epsilon}$. Then  
\[
h=\alpha f+cg
\]
is a $C^2$-function on ${\bb R}^n$. Here $c>0$ is a large constant to be determined.

Since $g=0$ on $\Omega\sub \cap B_i$, $h$ extends $f$. 

We have $h=f+cg$ on $\Omega^{\epsilon}$. Since the Hessian of $f$ is positive definite and the Hessian of $g$ is positive semidefinite, the Hessian of $h$ is positive definite.

The Hessian of $\alpha f$ is bounded on the compact subset $\overline{\Omega^{\text{co},2\epsilon}-\Omega'}$. The Hessian of $g$ is positive definite on $\overline{\Omega^{\text{co},2\epsilon}-\Omega'}\sub {\bb R}^n-\Omega'\sub {\bb R}^n-\cap\bar{B}_i$ and therefore has positive lower bound on the compact subset. By choosing sufficiently big $c$, the Hessian of $h$ is positive definite on $\Omega^{\text{co},2\epsilon}-\Omega'$.

We have $h=cg$ on ${\bb R}^n-\Omega^{\text{co},2\epsilon}$. Since the Hessian of $g$ is positive definite on ${\bb R}^n-\Omega^{\text{co},2\epsilon}\sub {\bb R}^n-\cap \bar{B}_i$, $h$ is positive definite on ${\bb R}^n-\Omega^{\text{co},2\epsilon}$. 
\end{proof}

A special case of Theorem \ref{hessian} is the convex boundary band in Theorem \ref{toalldiff}.

\begin{theorem}\label{hessian2}
Suppose $\Omega$ is a compact convex subset, and $A$ is a subset satisfying $\bar{A}\sub\Omega^{\text{\rm ri}}$. Then a $C^2$-function with positive definite Hessian on $\Omega-A$ can be extended to a $C^2$-function with positive definite Hessian on ${\bb R}^n-A$.  
\end{theorem}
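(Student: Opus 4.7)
The plan is to mimic the proof of Theorem \ref{hessian}, making two modifications tailored to the present setting. First, I will let $\Omega$ itself play the role of the ``compact subset'' in Theorem \ref{hessian}, even though $f$ is undefined on $A\subset\Omega$; this is legitimate because the extension only has to live on ${\bb R}^n-A$, so the values of auxiliary functions on $A$ are irrelevant. Second, correspondingly, the cutoff $\alpha$ can be pushed to equal $1$ on a full neighborhood of $\Omega$, not just on a neighborhood of some proper compact subset of $\Omega-A$ as was forced in the setup of Theorem \ref{hessian}.

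Here are the steps. Choose $\eta>0$ small enough that $f$ has positive definite Hessian on $\Omega^{\eta}-A$; this is possible by the continuity of $H_f$ combined with $\partial\Omega\subset\Omega-A$, which follows from the assumption $\bar{A}\subset\Omega^{\text{ri}}$. Using the standard approximation of a compact convex body from outside by an intersection of closed balls, select $\bar{B}_i$ with $\Omega\subset\bigcap B_i\subset\bigcap\bar{B}_i\subset\Omega^{\eta/3}$, and define $g(x)=\sum\gamma(\|x-x_i\|/r_i)$ exactly as in the proof of Theorem \ref{hessian}: smooth, vanishing on $\bigcap B_i$, with positive semidefinite Hessian everywhere and positive definite Hessian on ${\bb R}^n-\bigcap\bar{B}_i$. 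Pick a smooth cutoff $\alpha$ with $\alpha=1$ on $\Omega^{\eta/3}$ and $\alpha=0$ outside $\Omega^{2\eta/3}$, and set
\[
\tilde{f}=\alpha f+cg\quad\text{on }{\bb R}^n-A
\]
for a large constant $c>0$. The product $\alpha f$ makes sense on ${\bb R}^n-A$ because the support of $\alpha$ meets ${\bb R}^n-A$ only inside $\overline{\Omega^{2\eta/3}}-A$, which lies in the domain of $f$ for $\eta$ small.

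The verification of positive definite Hessian then splits into four regions parallel to Theorem \ref{hessian}. On $\Omega-A$, $\alpha=1$ and $g=0$, giving $\tilde{f}=f$, which is the extension property. On $\Omega^{\eta/3}-\Omega$, still $\alpha=1$, so $H_{\tilde{f}}=H_f+cH_g>0$ regardless of $c$. On the compact transition shell $\overline{\Omega^{2\eta/3}-\Omega^{\eta/3}}$, $H_g$ has a positive lower bound while $H_{\alpha f}$ is bounded, so a sufficiently large $c$ makes $H_{\tilde{f}}$ positive definite. On ${\bb R}^n-\Omega^{2\eta/3}$, $\tilde{f}=cg$ has positive definite Hessian directly.

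I expect the main obstacle to lie in the bookkeeping of scales rather than in any conceptually new step: $\eta$ must be chosen small enough that $H_f$ persists as positive definite on the outer collar $\Omega^{\eta}-\Omega$ and simultaneously that the balls $\bar{B}_i$ fit inside $\Omega^{\eta/3}$; only then can $c$ be selected at the end using the estimates on the transition shell. Each ingredient—the outside approximation of a compact convex body by a finite intersection of balls, the construction of $g$, and the dominating-constant argument—already appears in Theorem \ref{hessian}, so the genuine contribution here is the observation that the undefinedness of $f$ on $A$ does not obstruct pushing the cutoff all the way to a neighborhood of the convex hull.
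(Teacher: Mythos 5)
Your proposal is correct and is essentially the argument the paper intends: Theorem \ref{hessian2} is stated without a separate proof as a variant of Theorem \ref{hessian}, and your modification---pushing the cutoff $\alpha$ to equal $1$ on a full neighborhood $\Omega^{\eta/3}$ of the convex body, which is legitimate because $f$ is already defined with positive definite Hessian on $\Omega^{\eta}-A$ and the extension is only required on ${\bb R}^n-A$---is exactly the adaptation of that proof that is needed, with the four-region verification carried out correctly. The one implicit hypothesis worth recording is that your choice of $\eta$ rests on $\partial\Omega$ being compact and contained in $\Omega-\bar{A}$, which presumes $\Omega$ is full-dimensional (so that $\Omega^{\text{ri}}$ is the interior and $\Omega^{\eta}-\Omega$ lies in a small neighborhood of $\partial\Omega$); the paper is equally silent on the lower-dimensional case.
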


In case $A$ is empty, the theorem says that a $C^2$-function with positive definite Hessian on a compact convex subset can be extended to a $C^2$-function with positive definite Hessian on the whole ${\bb R}^n$.

\section{Equivalence between Different Convexities}
\label{compare}

We know the three kinds of convexities are equivalent on convex subsets. The following shows that the equivalence still holds if the subset is not too far from being convex.  

\begin{theorem}\label{th201}
Suppose $\Omega$ is a convex subset, and $A$ is a closed subset of dimension $\le \dim \Omega-2$. Then for continuous functions on $\Omega-A$, the convexity, the local convexity and the interval convexity are equivalent.
\end{theorem}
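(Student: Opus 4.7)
The implications convex $\implies$ locally convex $\implies$ interval convex were established in Section~\ref{convex} without any regularity assumption, so the content of the theorem is that, under $\dim A\le \dim \Omega-2$ and continuity, interval convexity upgrades to convexity. My plan is an approximation argument: given any convex combination $x=\sum\lambda_i x_i$ with $x$ and all $x_i$ in $\Omega-A$, I would perturb the $x_i$ to nearby $x_i'$ whose partial convex combinations miss $A$ entirely, apply interval convexity along a telescoping chain of intervals inside $\Omega-A$, and then let the perturbation shrink to zero and invoke continuity.

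The geometric engine is an avoidance lemma: for any $y_1,y_2\in \Omega-A$ and any $\epsilon>0$, there exist $y_1',y_2'\in \Omega-A$ within $\epsilon$ of $y_1,y_2$ with $[y_1',y_2']\cap A=\emptyset$. Indeed, with $y_1$ fixed, the ``shadow'' $\{y_2:(y_1,y_2)\cap A\ne\emptyset\}$ is the image of $A\times(0,1)$ under the smooth map $(a,t)\mapsto y_1+t^{-1}(a-y_1)$, so it has dimension at most $\dim A+1\le d-1$ in $\Omega^{\text{aff}}$ (with $d=\dim\Omega$), hence empty interior; perturb $y_2$ off the shadow, then $y_1$ likewise. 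Interval convexity along $[y_1',y_2']$, combined with continuity of $f$ (noting that $\lambda y_1'+(1-\lambda)y_2'$ converges to $\lambda y_1+(1-\lambda)y_2\in \Omega-A$, an open subset of $\Omega$), then delivers the two-point inequality
\[
f(\lambda y_1+(1-\lambda)y_2)\le \lambda f(y_1)+(1-\lambda)f(y_2)
\]
for any $y_1,y_2,\lambda y_1+(1-\lambda)y_2\in \Omega-A$.

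For the general $k$-point case, I would introduce the tail averages $y_j=\bigl(\sum_{i\ge j}\lambda_i\bigr)^{-1}\sum_{i\ge j}\lambda_i x_i$, so that $y_1=x$, $y_k=x_k$, and $y_j=\mu_j x_j+(1-\mu_j)y_{j+1}$ for suitable $\mu_j\in(0,1)$. The goal is to perturb $(x_1,\dotsc,x_k)$ simultaneously to $(x_1',\dotsc,x_k')$ so that every perturbed tail average $y_j'$ avoids $A$. For each $j$ the affine map $(x_j,\dotsc,x_k)\mapsto y_j$ is a surjection onto $\Omega^{\text{aff}}$, so the preimage of $A$ is a closed set of codimension $\ge 2$ in $(\Omega^{\text{aff}})^{k-j+1}$, and its lift has empty interior in $(\Omega^{\text{aff}})^k$; the finite union over $j$ of such closed nowhere-dense sets is still nowhere dense by Baire, so generic small perturbations of the $x_i$ succeed. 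Iterating the two-point inequality along $y_j'=\mu_j x_j'+(1-\mu_j)y_{j+1}'$ telescopes to $f(x')\le\sum\lambda_i f(x_i')$, and sending the perturbation to zero and using continuity of $f$ at $x$ finishes the argument.

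I expect the main obstacle to be making the simultaneous-perturbation dimension count rigorous: one needs that for each $j$, the event ``$y_j'\in A$'' is genuinely a codimension-$\ge 2$ condition on the parameter tuple $(x_1',\dotsc,x_k')$, which is precisely where the hypothesis $\dim A\le \dim \Omega-2$ bites (a codimension-$1$ $A$ would only give a nowhere-dense shadow in one variable, and generic perturbations could fail to preserve all $y_j'$ outside $A$, matching the counterexamples hinted at the end of Section~\ref{outside}). A secondary technical point is handling relative boundary points of $\Omega$: if some $x_i\in \partial\Omega$, I would first slide it slightly toward a fixed $p\in \Omega^{\text{ri}}$, which by convexity of $\Omega$ and closedness of $A$ stays inside $\Omega^{\text{ri}}-A$ for small displacements, after which the perturbation can be carried out inside the open subset $\Omega^{\text{ri}}$ of $\Omega^{\text{aff}}$.
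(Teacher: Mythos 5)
Your proposal is correct and runs on the same engine as the paper's proof: the telescoping decomposition of a $k$-point convex combination into two-point combinations via tail averages, a general-position perturbation made possible by $\dim A\le\dim\Omega-2$, and a limiting argument using continuity. The differences are organizational but worth noting. First, you upgrade interval convexity to the two-point inequality \eqref{convexf2} on all of $\Omega-A$ as a standalone lemma (via the shadow/avoidance argument, which is exactly where codimension $\ge 2$ is used), after which the telescoping only requires the perturbed tail averages $y_j'$ to miss $A$ as \emph{points} --- a codimension-$\ge 1$ condition settled by your product-space genericity count; the paper instead perturbs the chain sequentially and inductively so that each whole interval $[x_i',y_i']$ misses $A$. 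Both work, and yours is the more modular bookkeeping. Second, for relative-boundary points you shrink radially toward a fixed interior point and invoke the continuity hypothesis at $x$ and at each $x_i$ directly. The paper's Steps 4--5 are deliberately more elaborate: they introduce the convex-roof extension $\hat f$ on $\Omega^{\text{ri}}$ and prove one-sided limits $\lim_{\delta\to0^+}\hat f(x_i')\le f(x_i)$ without using continuity, so that continuity enters only when $x\notin\Omega^{\text{ri}}$; this is what allows the same argument to yield Theorem \ref{th202} (no continuity hypothesis, plus the strict versions). Your shortcut is perfectly adequate for Theorem \ref{th201} as stated, but it leans on continuity throughout and therefore would not give Theorem \ref{th202} for free.
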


The dimension can be defined in various ways, depending whether $A$ is a union of finitely many submanifolds, or a polyhedron, or some other topologically nice subset. The only key point we will use about the dimension condition is the consequence that topologists call ``general position''. This means that if $x,y$ are outside $A$, then we can move $y$ a little bit, so that the straight line connecting $x$ and $y$ avoids $A$. Such type of condition appeared in \cite{tab}, in which the main result is closely related to Theorems \ref{th201} and \ref{th202}.

The theorem basically concludes that the interval convexity on $\Omega-A$ implies the convexity. Since $A$ is closed and $\Omega^{\text{ri}}$ is relatively open, the interval convexity on $\Omega^{\text{ri}}-A$ implies the local convexity. By \cite[Theorem 10.1]{Rockafellar}, the function is always continuous on $\Omega^{\text{ri}}-A$. Therefore the continuity condition is really imposed on the boundary of $\Omega$.

\begin{proof}
We need to show that the property \eqref{convexi} implies the property \eqref{convexf}. The proof follows the classical proof that, on a convex subset, the property \eqref{convexf2} implies the property \eqref{convexf} (i.e., the case $k=2$ implies the general case).  

\medskip

\noindent{\em Step 1}: Review of the classical proof for the convex domain.

\medskip

The key point of the classical proof is to express a general convex combination as a sequence of convex combinations of two vectors. Consider a convex combination \eqref{ccombo} lying in $\Omega$. Without loss of generality, we may assume all $\lambda_i\in (0,1)$. Then the convex combination can be decomposed into convex combinations of two vectors
\begin{align*}
x &=\mu_1x_1+(1-\mu_1)y_1,\\
y_1 &=\mu_2x_2+(1-\mu_2)y_2,\\
&\;\vdots \\
y_{k-2} &=\mu_{k-1}x_{k-1}+(1-\mu_{k-1})y_{k-1}, \\
y_{k-1} &= x_k,
\end{align*}
where $\lambda_i=(1-\mu_1)\dotsb(1-\mu_{i-1})\mu_i$, $\mu_i\in (0,1)$ for $i<k$, and $\mu_k=1$. In the subsequent argument, the convex combination is fixed, so that $\lambda_i$ and $\mu_i$ are all fixed.

Applying \eqref{convexf2} to each convex combination above, we have
\begin{align*}
f(x) &\le \mu_1f(x_1)+(1-\mu_1)f(y_1),\\
f(y_1) &\le \mu_2f(x_2)+(1-\mu_2)f(y_2),\\
&\; \vdots \\
f(y_{k-2}) &\le \mu_{k-1}f(x_{k-1})+(1-\mu_{k-1})f(y_{k-1}), \\
f(y_{k-1}) &= f(x_k).
\end{align*}
Combining all the inequalities together, we get 
\begin{align*}
f(x) &\le \mu_1f(x_1)+(1-\mu_1)f(y_1) \\
&= \lambda_1f(x_1)+(1-\mu_1)f(y_1) \\
&\le \lambda_1f(x_1)+(1-\mu_1)[\mu_2f(x_2)+(1-\mu_2)f(y_2)] \\
&= \lambda_1f(x_1)+\lambda_2f(x_2)+(1-\mu_1)(1-\mu_2)f(y_2) \\
&\; \vdots \\
&\le \lambda_1f(x_1)+\lambda_2f(x_2)+\cdots+(1-\mu_1)\cdots(1-\mu_{k-1})f(y_{k-1})  \\
&=\lambda_1f(x_1)+\lambda_2f(x_2)+\cdots+\lambda_kf(x_k).
\end{align*}

\medskip

\noindent{\em Step 2}: Approximate convex combination decomposition.

\medskip

Under the assumption of the theorem, however, we only know $f$ is interval convex on $\Omega-A$. This means that \eqref{convexf2} holds only when the interval $[x_i,y_i]$ is contained in $\Omega-A$. Although this may not always hold, we will argue that for any $\epsilon>0$, there are $x_i',y_i'$ satisfying
\[ 
\|x_i-x_i'\|<\epsilon,\quad
[x_i',y_i']\in \Omega-A,
\]
such that 
\begin{align*}
x &=\mu_1x_1'+(1-\mu_1)y_1',\\
y_1' &=\mu_2x_2'+(1-\mu_2)y_2',\\
&\;\vdots \\
y_{k-2}' &=\mu_{k-1}x_{k-1}'+(1-\mu_{k-1})y_{k-1}', \\
y_{k-1}' &= x_k'.
\end{align*}
Then the classical proof gives us
\[
f(x)\le \lambda_1f(x_1')+\lambda_2f(x_2')+\cdots+\lambda_kf(x_k').
\]
As $\epsilon\to 0$, by the continuity of $f$ at $x_i$, the limit of the inequality gives \eqref{convexf}.

\begin{figure}[h]
\centering
\begin{tikzpicture}[scale=1]

	\draw
		(0,0) node[below=-2] {\small $x_1$} -- 
		(4,0) node[below=-2] {\small $x_2$} -- 
		(3,3) node[above] {\small $x_3$} -- cycle
		(0,0) -- (3.6,1.2) node[above left=-2] {\small $y_1$};
	\draw[dashed]
		(-0.2,0.3) node[left=-2] {\small $x_1'$} -- (3.9,0.95) node[right] {\small $y_1'$}
		(4.2,-0.1) node[right=-2] {\small $x_2'$} -- 
		(3.4,2.7) node[above right=-4] {\small $y_2'=x_3'$};
	\node at (1.8,0.8) {\small $x$};

\end{tikzpicture}
\caption{Approximate convex combination, with dashed lines avoiding $A$.}
\end{figure}
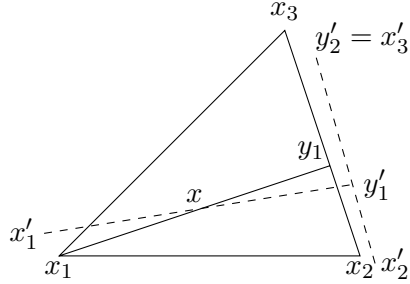

\medskip

\noindent{\em Step 3}: Construction of the approximation in the relative interior.

\medskip

We first construct the approximation for the case $x,x_1,\dotsc,x_k\in \Omega^{\text{ri}}-A$. The cases that some points may lie in $\Omega-\Omega^{\text{ri}}$ will be the limit of this first case.

The first approximation $x=\mu_1x_1'+(1-\mu_1)y_1'$ asks us to move $x_1$ to $x_1'\in \Omega$ by a small distance, such that the line connecting $x$ to $x_1'$ avoids $A$, and $y_1'=\frac{x-\mu_1x_1'}{1-\mu_1}$ still lies in $\Omega$. Since the dimension of $A$ is $\le \dim\Omega-2$, by the general position in topology, it is possible to move $x_1$ to $x_1'\in \Omega$ by arbitrarily small distance, such that the line avoids $A$. The small move of $x_1$ implies the correspondingly small move of $y_1$. If we start with $x_1,y_1\in \Omega^{\text{ri}}$, then we may choose the move of $x_1$ to be so small that both $x_1',y_1'$ still lie in $\Omega^{\text{ri}}$.

Inductively, after moving $x_{i-1}$ to $x_{i-1}'$ by a small distance, we get $y_{i-1}'$. Then we may move $x_i'$ by a small distance, such that the line connecting $y_{i-1}'$ and $x_i'$ avoids $A$. Moreover, this produces $y_i'=\frac{y_{i-1}'-\mu_ix_i'}{1-\mu_i}$ satisfying
\begin{align*}
\|y_i-y_i'\|
&\le \dfrac{1}{1-\mu_i}(\mu_i\|x_i-x_i'\|+\|y_{i-1}-y_{i-1}'\|) \\
&\le \dfrac{\mu_i}{1-\mu_i}\|x_i-x_i'\|+\dfrac{\mu_{i-1}}{(1-\mu_i)(1-\mu_{i-1})}\|x_{i-1}-x_{i-1}'\| \\
&\quad +\dotsb+\dfrac{\mu_1}{(1-\mu_i)\dotsb(1-\mu_1)}\|x_1-x_1'\|.
\end{align*}
By $y_i\in\Omega^{\text{ri}}$, there is $\epsilon'>0$, such that $y\in \Omega^{\text{aff}}$ and $\|y_i-y\|<\epsilon'$ for some $i$ implies $y\in\Omega^{\text{ri}}$. Then the estimation above for $\|y_i-y_i'\|$ shows that there is $\epsilon''>0$, such that 
\[
\|x_i-x_i'\|<\epsilon'' \text{ for all }i\le j
\implies
\|y_j-y_j'\|<\epsilon'.
\]
So by making sure that the following is always satisfied in the inductive construction,
\[
\|x_i-x_i'\|<\epsilon,\quad
\|x_i-x_i'\|<\epsilon'',\quad
x_i'\in \Omega^{\text{ri}},
\]
we get $\|x_i-x_i'\|<\epsilon$ and $x_i',y_i'\in \Omega^{\text{ri}}$ at the end. By the convexity of $\Omega$, we have $[x_i',y_i']\sub\Omega^{\text{ri}}$. Moreover, as part of the line connecting $y_{i-1}'$ and $x_i'$, the interval $[x_i',y_i']$ avoids $A$. Thus we conclude that $[x_i',y_i']\sub\Omega^{\text{ri}}-A$.

So far we proved the convexity of $f$ on $\Omega^{\text{ri}}-A$. By Theorem \ref{tohull}, we get a convex extension $\hat{f}$ on the convex hull of $\Omega^{\text{ri}}-A$. Due to the low dimension of $A$, the convex hull is $\Omega^{\text{ri}}$.

\medskip

\noindent{\em Step 4}: The case $x\in \Omega^{\text{ri}}$ but $x_i$ may not be in $\Omega^{\text{ri}}$.

\medskip

We may scale the convex combination $x=\lambda_1 x_1+\dotsb+\lambda_k x_k$ by a factor of $1-\delta$ and get
\[
x=\lambda_1 x_1'+\dotsb+\lambda_k x_k',\quad
x_i'=(1-\delta)x_i+\delta x.
\]
The idea is similar to the proof of Proposition \ref{sconvex}, except the convex combination was shrunken to be within a neighborhood of $x$ in the earlier argument, while here the convex combination is shrunken just a little bit. Since $x\in \Omega^{\text{ri}}$, we have $x_i'\in\Omega^{\text{ri}}$. Then the convex extension $\hat{f}$ on $\Omega^{\text{ri}}$ satisfies
\[
f(x)=\hat{f}(x)\le\lambda_1 \hat{f}(x_1')+\dotsb+\lambda_k \hat{f}(x_k').
\]
Next we will argue that $\lim_{\delta\to 0^+}\hat{f}(x_i')\le f(x_i)$ for each $x_i$. Then taking the limit of the inequality above gives us the inequality \eqref{convexf}.

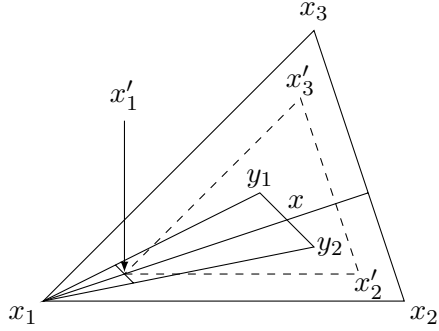
\begin{figure}[h]
\centering
\begin{tikzpicture}[scale=1.2,>=latex]

	\draw
		(0,0) node[below left=-2] {\small $x_1$} -- 
		(4,0) node[below right=-2] {\small $x_2$} -- 
		(3,3) node[above] {\small $x_3$} -- cycle
		(0,0) -- (3.6,1.2);
	\draw[dashed, shift={(0.9cm,0.3cm)},scale=0.65]
		(0,0) -- 
		(4,0) node[below right=-6] {\small $x_2'$} -- 
		(3,3) node[above=-3] {\small $x_3'$} -- cycle;
	\draw
		(0,0) -- 
		(2.4,1.2) node[above=-2] {\small $y_1$} -- 
		(3,0.6) node[right=-3] {\small $y_2$} --  cycle
		(1,0.2) -- (0.9,0.3) -- (0.8,0.4);
	\node at (2.8,1.1) {\small $x$};
	\draw[->]
		(0.9,2) node[above] {\small $x_1'$} -- (0.9,0.32);

\end{tikzpicture}
\caption{Prove $\lim_{\delta\to 0^+}f(x_1')\le f(x_1)$ when $[x_1,x]$ may intersect $A$.}
\end{figure}

If $[x_i,x]\sub \Omega-A$, then the restriction of $f$ on the interval $[x_i,x]$ is convex, and we get $\lim_{\delta\to 0^+}f(x_i')\le f(x_i)$ by the property of convex functions on closed interval. In case $[x_i,x]$ intersects $A$, however, another approximation argument is needed. In the subsequent argument, $i$ is fixed to be $1$.

By $x\in \Omega^{\text{ri}}-A$ and $A$ closed, there is an open ball $B\sub\Omega^{\text{ri}}-A$ around $x$. Then we find a convex combination 
\[
x=\mu_1y_1+\dotsb+\mu_dy_d,
\]
where $d=\dim\Omega$, $x\ne y_j\in B$, and $x_1,y_1,\dotsc,y_d$ are affinely independent. By the general position in topology, we can move each $y_j$ to $y_j'\in B$ by a small distance, so that the line $L_j$ connecting $x_1$ and $y_j'$ avoids $A$. By choosing $y_j'$ instead to be the intersection of $L_j$ with the affine subspace spanned by $y_1,\dots,y_d$, we may further make sure that $x$ is still a convex combination of $y_j'$. So without loss of generality, we may additionally assume that the convex combination $x=\mu_1y_1+\dotsb+\mu_dy_d$ satisfies $[x_1,y_j]\sub\Omega-A$ for all $j$. Then by the convexity of $f$ on the interval $[x_1,y_j]$, we get 
\[
\lim_{\delta\to 0^+}f((1-\delta)x_1+\delta y_j)\le f(x_1).
\]
On the other hand, by applying the convexity of $\hat{f}$ on $\Omega^{\text{ri}}$ to the convex combination in $\Omega^{\text{ri}}$
\begin{align*}
x_1'
&=(1-\delta)x_1+\delta(\mu_1y_1+\dotsb+\mu_dy_d) \\
&=\mu_1((1-\delta)x_1+\delta y_1)+\dotsb+\mu_d((1-\delta)x_1+\delta y_d), 
\end{align*}
we get 
\begin{align*}
\hat{f}(x_1')
&\le \mu_1\hat{f}((1-\delta)x_1+\delta y_1)+\dotsb+\mu_d\hat{f}((1-\delta)x_1+\delta y_d) \\
&= \mu_1f((1-\delta)x_1+\delta y_1)+\dotsb+\mu_df((1-\delta)x_1+\delta y_d). 
\end{align*}
Here is equality is due to $(1-\delta)x_1+\delta y_i\in \Omega^{\text{ri}}-A$. Taking the limit, we get
\[
\lim_{\delta\to 0^+}\hat{f}(x_1')
\le \mu_1f(x_1)+\dotsb+\mu_df(x_1)
= f(x_1).
\]

\medskip

\noindent{\em Step 5}: The case $x$ is not in $\Omega^{\text{ri}}$.

\medskip

Pick a point $z\in \Omega^{\text{ri}}$. For any $\delta>0$, the convex combination \eqref{ccombo} is approximated by
\[
x'=\lambda_1 x_1'+\dotsb+\lambda_k x_k',\quad
x_i'=(1-\delta)x_i+\delta z.
\]
By $z\in \Omega^{\text{ri}}$, we have $x_i',x'\in \Omega^{\text{ri}}$, so that 
\[
\hat{f}(x')\le\lambda_1 \hat{f}(x_1')+\dotsb+\lambda_k\hat{f}(x_k').
\]
It remains to show $\lim_{\delta\to 0^+}\hat{f}(x')=f(x)$ and the similar limits for $\hat{f}(x_i')$. Then we get \eqref{convexf} by taking the limit of the inequality above.

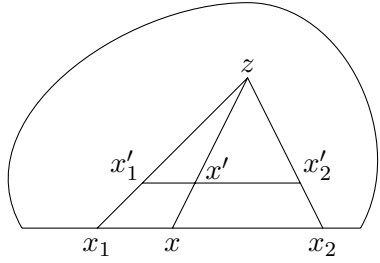
\begin{figure}[h]
\centering
\begin{tikzpicture}[scale=1]

	\draw
		(-1,0) to[out=120,in=180] (2,3) to[out=0,in=60] (3.5,0)
		(-1,0) -- (3.5,0)
		(0,0) node[below] {\small $x_1$} -- 
		(2,2) node[above=-2] {\small $z$} -- 
		(3,0) node[below] {\small $x_2$}
		(1,0) node[below] {\small $x$} -- (2,2)
		(0.6,0.6) node[above left=-3] {\small $x'_1$} -- 
		(2.7,0.6) node[above right=-3] {\small $x'_2$};
	\node at (1.6,0.8) {\small $x'$};

\end{tikzpicture}
\caption{Convexity when $x$ is not in $\Omega^{\text{ri}}$.}
\end{figure}

In fact, we will prove $\lim_{x'\in\Omega^{\text{ri}},\; x'\to x}\hat{f}(x')=f(x)$, which implies what we want. By the continuity of $f$ at $x\in \Omega-A$, for any $\epsilon>0$, there is $\delta>0$, such that $y\in \Omega-A$ and $\|y-x\|<\delta$ imply $|f(y)-f(x)|<\epsilon$. Now for $x'\in \Omega^{\text{ri}}$ satisfying $\|x'-x\|<\delta$, by the low dimension of $A$ and the continuity of $\hat{f}$ at $x'\in \Omega^{\text{ri}}$ (because convexity implies continuity in the relative interior), we can find $y$, such that
\[
y\in\Omega^{\text{ri}}-A,\quad 
\|y-x\|<\delta,\quad
|f(y)-\hat{f}(x')|=|\hat{f}(y)-\hat{f}(x')|<\epsilon.
\]
Then 
\[
|\hat{f}(x')-f(x)|\le |f(y)-\hat{f}(x')|+|f(y)-f(x)|<2\epsilon.
\]
This completes the proof.
\end{proof}

As remarked right before the proof, the interval convexity implies the continuity on $\Omega^{\text{ri}}-A$. Therefore the continuity of the function is used only in the fifth step of the proof. Since $x\not\in \Omega^{\text{ri}}$ necessarily forces $x_i\not\in \Omega^{\text{ri}}$, we may add a condition to avoid the fifth step and get the non-strict part of the following result.

\begin{theorem}\label{th202}
Suppose $\Omega$ is a convex subset with the additional property that, in every convex combination \eqref{ccombo} with $x_i,x\in\Omega$ and $x_i\ne x$, we always have $x\in \Omega^{\text{\rm ri}}$. Suppose $A$ is a closed subset of dimension $\le \dim \Omega-2$. Then the convexity, the local convexity and the interval convexity are equivalent on $\Omega-A$. Moreover, the strict versions of the convexity are also equivalent on $\Omega-A$. 
\end{theorem}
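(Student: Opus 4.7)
My plan is to dispatch the two assertions separately. For the non-strict equivalence I will argue that the extra hypothesis on $\Omega$ renders Step 5 of the proof of Theorem \ref{th201} vacuous, after which that proof goes through without assuming any continuity of $f$ on $\Omega-\Omega^{\text{\rm ri}}$. Indeed, suppose $f$ is interval convex on $\Omega-A$ and consider a convex combination $x=\lambda_1 x_1+\dotsb+\lambda_k x_k$ with $x_i\in\Omega-A$. If some $x_i\ne x$, the hypothesis on $\Omega$ forces $x\in\Omega^{\text{\rm ri}}$, so $x$ is covered by Steps 1--4; otherwise every $x_i=x$ and \eqref{convexf} holds trivially. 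A line-by-line inspection of Steps 1--4 shows they invoke only the interval convexity of $f$ and the convex roof $\hat f$ on $\Omega^{\text{\rm ri}}$ from Theorem \ref{tohull}; the one-sided estimates $\limsup_{\delta\to 0^+}f((1-\delta)x_1+\delta y_j)\le f(x_1)$ in Step 4 are pure consequences of convexity on the interval $[x_1,y_j]$, and interval convexity already yields continuity of $f$ on the relatively open set $\Omega^{\text{\rm ri}}-A$ via \cite[Theorem 10.1]{Rockafellar}. This establishes non-strict convexity of $f$ on $\Omega-A$.

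For the strict part, because strictly convex $\Rightarrow$ strictly locally convex $\Rightarrow$ strictly interval convex is immediate from the implications already noted in Section \ref{convex}, it suffices to prove that strict interval convexity implies strict convexity. Assume $f$ is strictly interval convex on $\Omega-A$. Then $f$ is in particular interval convex, hence convex on $\Omega-A$ by the previous paragraph. Take a convex combination $x=\lambda_1 x_1+\dotsb+\lambda_k x_k$ in $\Omega-A$ with every $x_i\ne x$ and every $\lambda_i\in(0,1)$. The additional hypothesis gives $x\in\Omega^{\text{\rm ri}}$, and since $A$ is closed, $\Omega^{\text{\rm ri}}-A$ is relatively open in $\Omega^{\text{aff}}$, so I can pick a relative ball $B\sub\Omega^{\text{\rm ri}}-A$ centered at $x$. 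Because $B$ is convex, the strict interval convexity of $f$ on $\Omega-A$ restricts to genuine strict convexity of $f$ on $B$.

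Now I borrow the interpolation from the proof of Proposition \ref{sconvex}. For $\delta>0$ sufficiently small, the points $y_i=(1-\delta)x+\delta x_i$ all lie in $B$, and each $y_i\ne x$ since $x_i\ne x$. Applying the strict convexity of $f|_B$ to the non-trivial convex combination $x=\lambda_1 y_1+\dotsb+\lambda_k y_k$ gives $f(x)<\lambda_1 f(y_1)+\dotsb+\lambda_k f(y_k)$. Applying the already-established non-strict convexity of $f$ on $\Omega-A$ to each two-term combination $y_i=(1-\delta)x+\delta x_i$ gives $f(y_i)\le (1-\delta)f(x)+\delta f(x_i)$. Substituting and simplifying yields $f(x)<(1-\delta)f(x)+\delta\sum_i\lambda_i f(x_i)$, which rearranges to the desired strict inequality $f(x)<\sum_i\lambda_i f(x_i)$.

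The main delicate point is the bookkeeping hidden in the first paragraph: one must verify that none of Steps 1--4 in the proof of Theorem \ref{th201} secretly appeals to continuity at points of $\Omega-\Omega^{\text{\rm ri}}$, since that is precisely the hypothesis we are dropping. Once that is confirmed, the additional property of $\Omega$ cleanly disposes of Step 5, and the strict part becomes a direct transcription of the Proposition \ref{sconvex} interpolation into the relative interior.
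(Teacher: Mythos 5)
Your proof is correct, and the non-strict half is exactly the paper's argument: the author likewise observes that continuity enters only in Step 5 of the proof of Theorem \ref{th201}, that the additional hypothesis on $\Omega$ (after discarding the trivial case where every $x_i=x$) forces $x\in\Omega^{\text{ri}}$, and that Steps 1--4 need only the automatic continuity on $\Omega^{\text{ri}}-A$ and one-sided limits along intervals. Where you diverge is the strict part. The paper perturbs the \emph{center} point symmetrically along the direction of $x_1$, setting $x'=x-\delta(x-x_1)$ and $x''=x+\delta(x-x_1)$, rewrites each as a convex combination of the original $x_i$, applies the already-proved non-strict convexity to $x'$ and $x''$, and extracts strictness from the single midpoint inequality $f(x)<\tfrac{1}{2}(f(x')+f(x''))$ on the interval $[x',x'']\subset\Omega^{\text{ri}}-A$; this uses only strict \emph{interval} convexity on one segment. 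You instead shrink the \emph{outer} points toward $x$, which is the Proposition \ref{sconvex} mechanism, and this requires knowing that strict interval convexity on the convex ball $B$ upgrades to genuine strict convexity of $f|_B$ --- a fact the paper asserts in Section \ref{convex} (equivalence of the strict convexities on convex subsets) but which is itself a small multi-point argument. So the paper's route is marginally more economical in what it assumes pointwise (one interval versus a ball), while yours has the virtue of being a literal transcription of an argument already on record; both are valid, and the bookkeeping you flag (no hidden continuity at boundary points in Steps 1--4) checks out.
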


Open convex subsets have the additional property. Convex subsets with strictly convex boundaries also have the additional property. 

\begin{example}
Let $\Omega$ be the open square $(0,1)\times(0,1)$ together with a discrete subset $D\sub (0,1)\times 0$ of one side of the cube. Let $A=\emptyset$. Then the condition for $\Omega-A$ in Theorem \ref{th201} is satisfied. On the other hand, a function that takes $0$ on the open square and takes arbitrary positive values on $D$ is always locally convex but may not be convex.

The example shows that, given the lack of continuity assumption, the additional property in Theorem \ref{th202} is really needed. 
\end{example}

\begin{proof}[Proof of Theorem \ref{th202}]
We only need to explain the strict part. With the additional property, we only need to consider the case $x\in \Omega^{\text{ri}}-A$, and may additionally assume $\lambda_i\in (0,1)$. Given the interval convexity, we already know that the non-strict inequality \eqref{convexf} holds. To enhance the inequality to become strict, we move $x$ a little bit toward and away from $x_1$.

For a small $\delta>0$, introduce
\begin{align*}
x'
&=\delta x_1+(1-\delta)x
=x-\delta(x-x_1) \\
&=(\delta+(1-\delta)\lambda_1)x_1+(1-\delta)\lambda_2x_2+\cdots+(1-\delta)\lambda_kx_k, \\
x''
&=-\delta x_1+(1+\delta)x
=x+\delta(x-x_1) \\
&=(-\delta+(1+\delta)\lambda_1)x_1+(1+\delta)\lambda_2x_2+\cdots+(1+\delta)\lambda_kx_k.
\end{align*}
If $\delta$ is small enough, then the above are still convex combinations. Moreover, by $x\in \Omega^{\text{ri}}-A$, we can also make sure that $[x',x'']\sub\Omega^{\text{ri}}-A$.

By the convexity that we have already proved, we have
\begin{align*}
f(x')
&\le (\delta+(1-\delta)\lambda_1)f(x_1)+(1-\delta)\lambda_2f(x_2)+\cdots+(1-\delta)\lambda_kf(x_k), \\
f(x'')
&\le (-\delta+(1+\delta)\lambda_1)f(x_1)+(1+\delta)\lambda_2f(x_2)+\cdots+(1+\delta)\lambda_kf(x_k).
\end{align*}
Then by $x=\frac{1}{2}(x'+x'')$ and the strict convexity of $f$ on $[x',x'']$, we get $f(x)<\frac{1}{2}(f(x')+f(x''))$. Combining the three inequalities together, we get \eqref{convexf} with strict inequality.
\end{proof}

Can we relax the dimension condition in Theorems \ref{th201} and \ref{th202}? The following says that the condition cannot be relaxed to $\dim A\le \dim\Omega-1$.

\begin{theorem}\label{th203}
Suppose $U$ is an $(n-1)$-dimensional submanifold of ${\bb R}^n$, and $\Omega\sub {\bb R}^n-U$ is a subset. Suppose $L$ is a straight line that intersects $U$ transversely at an interior point $u$ of $U$. If there is a point $p\in L\cap \Omega$ and an open interval $I\sub L\cap\Omega$ lying in different components of $L-u$, then there is a locally convex function on $\Omega$ that is not convex.
\end{theorem}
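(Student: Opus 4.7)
\medskip

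\noindent\textbf{Proof plan.}
The strategy is to mimic the mechanism of Example \ref{eg22}: construct a $C^2$ function on ${\bb R}^n\setminus U$ with positive-definite Hessian (hence locally convex on $\Omega$) but with incompatible limits at $u$ along the two rays of $L$ (hence not globally convex). First pick $q_1,q_2 \in I$ with $q_1$ strictly between $u$ and $q_2$ along $L$, so that $q_1 = \mu p + (1-\mu)q_2$ for some $\mu\in(0,1)$. The segment $[p,q_2]$ passes through $u\notin\Omega$, and this convex combination is what will witness non-convexity.

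Because $u$ is an interior point of $U$ and $L$ meets $U$ transversely at $u$, the implicit function theorem yields an open ball $B=B(u,r)$ and a smooth function $h:B\to{\bb R}$ with $h^{-1}(0)=U\cap B$ and $\nabla h\neq 0$ on $B$; orient $h$ so $h>0$ on the part of the $p$-ray near $u$ and $h<0$ on the part of the $I$-ray near $u$. Shrinking $B$, $B\setminus U=B^+\sqcup B^-$ with $B^{\pm}=B\cap\{\pm h>0\}$. Now construct a bounded smooth function $\theta:{\bb R}^n\setminus U\to{\bb R}$ with bounded Hessian on a neighborhood of $\Omega$ such that in some smaller ball $B'\subset B$ around $u$, $\theta\equiv 0$ on $B'\cap B^+$ and $\theta\equiv 1$ on $B'\cap B^-$. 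If $p$ and $I$ lie in different connected components of $\Omega$, one simply takes $\theta$ to be the appropriate locally constant function, and $H_\theta\equiv 0$. In the general case (which includes the situation of Example \ref{eg22}, where $\Omega$ wraps around $U$), one uses an ``angle-function'' style construction, distributing the transition over a large region so that $M:=\sup_{\Omega^\epsilon}|H_\theta|$ can be made as small as desired.

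Set $f(x)=C\|x-u\|^2+a\,\theta(x)$. Since $\Omega\subset{\bb R}^n\setminus U$ and $H_f=2CI+aH_\theta$, taking $a<2C/M$ gives $H_f$ positive-definite on a neighborhood of $\Omega$ in ${\bb R}^n\setminus U$, so $f|_\Omega$ is locally convex. For non-convexity, either compute directly:
\[
f(q_1)-\mu f(p)-(1-\mu)f(q_2)
=-C\mu(1-\mu)\|p-q_2\|^2 + a\bigl(\theta(q_1)-\mu\theta(p)-(1-\mu)\theta(q_2)\bigr),
\]
and observe that for $q_1,q_2$ sufficiently close to $u$ the bracket equals $\mu$, so choosing $a$ large enough (consistent with $a<2C/M$, possible provided $M$ was made small) makes the expression strictly positive. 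Alternatively, argue as in Example \ref{eg22}: $f$ is bounded below, so if it were convex on $\Omega$, Theorem \ref{tohull} would extend it to a convex function on $\Omega^{\mathrm{co}}$ which is continuous on $\Omega^{\mathrm{co,ri}}$ \cite[Theorem 10.1]{Rockafellar}; but $u\in\Omega^{\mathrm{co,ri}}$ and $\lim_{t\to 0^+}f(L(t))=a\neq 0=\lim_{t\to 0^-}f(L(t))$, a contradiction.

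The main obstacle is the construction of $\theta$ in the case where ${\bb R}^n\setminus U$ is connected so that one cannot take $\theta$ locally constant. Here one needs a genuine analog of the angular coordinate used in Example \ref{eg22}: a smooth function on ${\bb R}^n\setminus U$ realizing a prescribed jump across $U$ near $u$ while having arbitrarily small Hessian away from a thin transition region. Producing this with enough Hessian control to beat the $-C\mu(1-\mu)\|p-q_2\|^2$ term (equivalently, keeping both $a<2C/M$ and the direct non-convexity inequality satisfiable) is the principal technical step; the rest of the argument is routine.
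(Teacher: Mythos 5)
Your overall architecture (a convexifying quadratic plus a ``jump'' term $a\theta$ that is $0$ on the $p$-side of $U$ and $1$ on the $I$-side, with non-convexity witnessed by the three collinear points $p,q_1,q_2$) is the right kind of idea, but the proposal has a genuine gap exactly where you flag it: the construction of $\theta$. You need simultaneously $a>C(1-\mu)\|p-q_2\|^2$ (to destroy convexity) and $a\sup|H_\theta|<2C$ (to keep the Hessian positive definite), i.e.\ a jump function whose Hessian on a neighborhood of $\Omega$ is below a specific threshold. No argument is given that such a $\theta$ exists, and in general it cannot be made ``as small as desired'': when $\Omega$ is large and connected (e.g.\ a ball minus $U$, with $U$ a small disk), any function realizing a unit jump across $U$ near $u$ must make its transition inside a neighborhood of $\Omega$, and its gradient and Hessian are forced to be large near the edge $\pa U$ where the two sides of $U$ pinch together. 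The paper's proof is designed precisely to sidestep this tension: writing $x=(t,y)$ with $t$ the transverse coordinate, the jump term is $(t+c)\beta(\|y\|^2)$, which is \emph{linear} in $t$. Its Hessian therefore has no $tt$-component; all of its (possibly large) Hessian cost sits in the $ty$- and $yy$-entries, and is absorbed not by shrinking the jump but by enlarging the tangential quadratic $b\|y\|^2$ --- which is harmless because it vanishes identically on the test line $L=\{y=0\}$ where non-convexity is measured. This decoupling of ``where the Hessian is repaired'' from ``where convexity is violated'' is the missing idea; without it your two constraints on $a$ need not be simultaneously satisfiable. (The paper also invokes Theorem \ref{hessian2} at the end to pass from a large ball to all of ${\bb R}^n-G$; your construction would need a similar step since $\Omega$ may be unbounded.)

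Your fallback argument via Theorem \ref{tohull} and continuity of convex functions does not repair this, because it is invalid in the stated generality. First, $u$ need not lie in $\Omega^{\text{co,ri}}$ (take $\Omega^{\text{co}}$ a square with $p$, $I$, and $u$ all on one edge); this is fixable by restricting the extension to the segment $[p,q_2]$, on whose relative interior a convex function is automatically continuous. But second, and fatally, the hypothesis gives only a point $p$ and an interval $I$ on the two sides of $u$, with no requirement that $\Omega$ accumulate at $u$; the limits $\lim_{t\to 0^{\pm}}f(L(t))$ are limits of your ambient function on ${\bb R}^n-U$, not of $f|_{\Omega}$, and they impose no constraint on the convex extension $\hat f$ of $f|_{\Omega}$, which may differ from $f$ off $\Omega$. (This is why Example \ref{eg22}, where $\Omega$ does accumulate at the slit from both sides, can use the continuity argument, while Theorem \ref{th203} cannot.) The paper instead detects non-convexity by comparing the secant slope over $[(-a,0),(a,0)]$ with the one-sided derivative $\pa f/\pa t$ at $(-a,0)$, the interval $I$ serving exactly to legitimize that derivative inside $\Omega$; your three-point inequality would also work, but only once the quantitative construction of the jump term is actually carried out.
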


The setup for the theorem is illustrated in Fig.~\ref{th203_fig}. The statement uses some standard topological concepts. For those who are not familiar with the concepts, an affinely equivalent description is given in the first paragraph of the proof. We also note that the proof actually constructs a function $f$ on the complement of any $(n-1)$-dimensional submanifold, such that $f$ has positive definite Hessian but is not convex.

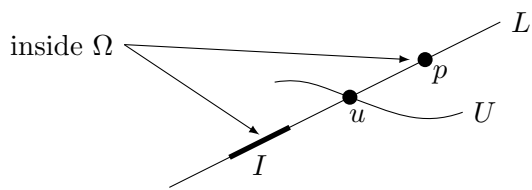
\begin{figure}[h]
\centering
\begin{tikzpicture}[scale=1,>=latex]

	\draw
		(-0.4,-0.2) --  
		(4,2) node[right] {\small $L$}
		(1,1.2) to[out=10,in=160] (2,1) to[out=-20,in=200] (3.5,0.8) node[right] {\small $U$};
	\draw[line width=2]
		(0.4,0.2) -- node[below] {\small $I$} (1.2,0.6);
	\draw[->]
		(-1,1.7) node[left] {\small inside $\Omega$} -- (2.8,1.5);
	\draw[->]
		(-1,1.7) -- (0.8,0.5);
		
	\fill (3,1.5) circle (0.1);
	\node at (3.2,1.3) {\small $p$};
	
	\fill (2,1) circle (0.1);
	\node at (2.1,0.75) {\small $u$};

\end{tikzpicture}
\caption{Condition for local convexity not implying convexity.}
\label{th203_fig}
\end{figure}

\begin{example}
The existence of the interval $I$ in Theorem \ref{th203} cannot be reduced to a point. For example, if $\Omega$ consists of exactly two points, then any function on $\Omega$ is convex. More generally, for convex subsets $\Omega_i\sub {\bb R}^{n_i}$ and $a_i\not\in \Omega_i$, any locally convex function on $\Omega=(\Omega_1\times a_2)\cup (a_1\times\Omega_2) \sub{\bb R}^{n_1+n_2}$ is convex. 
\end{example}

\begin{proof}[Proof of Theorem \ref{th203}]
Write points in ${\bb R}^n$ as $x=(t,y)$, $t\in {\bb R}$, $y\in {\bb R}^{n-1}$. By an affine transformation, we may assume that $U$ contains the graph $G=\{(g(y),y)\colon y\in B\}$ of a continuous function $g$ over the unit ball $B=\{y\colon \|y\|<1\}$. We may also assume that there is $a$ satisfying 
\[
-a<g(0)<a,\quad
p=(a,0)\in \Omega,\quad
I=(-a-\delta,-a+\delta)\times 0\sub \Omega. 
\]
See Fig.~\ref{th203_fig2}.

Let $\beta(t)$ be a smooth function, such that $\beta=1$ on $(-\infty,\frac{1}{2}]$ and $\beta=0$ on $[1,+\infty)$. Let
\[
f(t,y)=\begin{cases}
t^2+b\|y\|^2+(t+c)\beta(\|y\|^2),
&\text{ for }t>g(y),\; y\in B, \\
t^2+b\|y\|^2,
&\text{ otherwise in }{\bb R}^n-G,
\end{cases}
\]
where $b$ and $c$ are constants to be determined. The function is smooth on ${\bb R}^n-G$. We have
\[
\frac{f(a,0)-f(-a,0)}{a-(-a)}=\frac{a+c}{2a},\quad
\frac{\pa f}{\pa t}(-a,0)=-2a.
\]
If we fix $c$ satisfying $c<-a-4a^2$, then 
\[
\frac{f(a,0)-f(-a,0)}{a-(-a)}<\frac{\pa f}{\pa t}(-a,0).
\]
This implies that $f$ is not convex on $p\cup I\sub \Omega$. Note that the interval $I\sub\Omega$ is needed here because of the use of the partial derivative $\frac{\pa f}{\pa t}$ at $(-a,0)$.

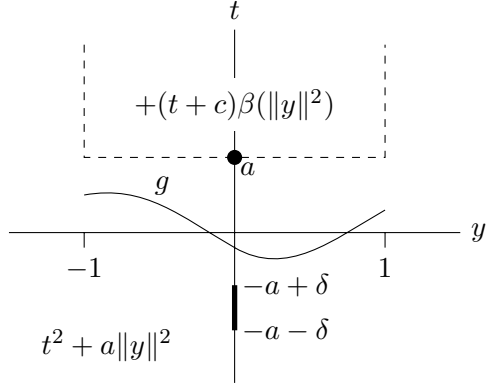
\begin{figure}[h]
\centering
\begin{tikzpicture}[scale=1]

	\draw
		(-3,0) -- (3,0) node[right] {\small $y$}
		(0,-2) -- (0,2.7) node[above] {\small $t$}
		(-2,0) -- +(0,-0.2) node[below] {\small $-1$}
		(2,0) -- +(0,-0.2) node[below] {\small $1$}
		(-2,0.5) to[out=10,in=150] node[above] {\small $g$} (0,-0.2) to[out=-30,in=210] (2,0.3);
	\draw[dashed]
		(-2,2.5) -- (-2,1) -- (0,1) node[below right=-2] {\small $a$} -- (2,1) -- (2,2.5);
	\fill (0,1) circle (0.1);
	\draw[line width=2]
		(0,-1.3) node[right=-2] {\small $-a-\delta$} -- +(0,0.6) node[right=-2] {\small $-a+\delta$};
	\node at (-1.7,-1.5) {\small $t^2+a\|y\|^2$};
	\node[fill=white] at (0,1.7) {\small $+(t+c)\beta(\|y\|^2)$};

\end{tikzpicture}
\caption{Construct locally convex but not convex function.}
\label{th203_fig2}
\end{figure}

It remains to choose $b$, so that $f$ has positive definite Hessian. The function $t^2+b\|y\|^2$ has positive definite Hessian as long as $b>0$. For $t>g(y)$ and $y\in B$, the Hessian of $f$ at $(t,y)$ is
\begin{align*}
\frac{1}{2}H_f(\tau,\eta)
&=\tau^2+b\eta^2+2\tau\beta'(\|y\|^2)y\cdot \eta \\ 
&\quad +(t+c)\beta'(\|y\|^2)\eta^2+2(t+c)\beta''(\|y\|^2)(y\cdot\eta)^2 \\
&=[\tau+\beta'(\|y\|^2)y\cdot \eta]^2 +b\eta^2 \\ 
&\quad+(t+c)\beta'(\|y\|^2)\eta^2+[2(t+c)\beta''(\|y\|^2)-\beta'(\|y\|^2)^2](y\cdot\eta)^2.
\end{align*}
Inside a big ball $B_R$ of radius $R$ and centered at the origin, the sum $(t+c)\beta'(\|y\|^2)\eta^2+[2(t+c)\beta''(\|y\|^2)-\beta'(\|y\|^2)^2](y\cdot\eta)^2$ does not involve $\tau$ and is bounded (meaning that the absolute value $\le C\|\eta\|^2$ for some constant $C$). Then by choosing $b$ to be bigger than this bound, we get the Hessian of $f$ to be positive definite on $B_R-G$.

By Theorem \ref{hessian2}, the function $f|_{B_R-G}$ can be extended to a function with positive definite Hessian on ${\bb R}^n-G$. Since ${\bb R}^n-G\supset {\bb R}^n-U\supset \Omega$, the extension is also a function with positive definite Hessian on $\Omega$.
\end{proof}

\end{document}